\documentclass[11pt]{amsart}
\usepackage{comment}
\usepackage{amssymb,amsmath,amsthm,mathrsfs,amstext,amssymb}
\usepackage{dsfont}
\usepackage{tabularx}
\usepackage{array}
\usepackage[square,sort,comma,numbers]{natbib}
\usepackage{url} 
\usepackage{hyperref}
\usepackage{accents}
\usepackage[shortlabels]{enumitem}
\usepackage{tikz-cd}

\newtheorem{theorem}{Theorem}
\numberwithin{theorem}{section}
\newtheorem*{theorem*}{Theorem}

\newtheorem*{maintheorem*}{Main Theorem}
\newtheorem{lemma}[theorem]{Lemma}
\newtheorem*{lemma*}{Lemma}

\newtheorem*{fact*}{Fact}
\newtheorem{corollary}[theorem]{Corollary}
\newtheorem*{corollary*}{Corollary}
\newtheorem{proposition}[theorem]{Proposition}
\newtheorem*{proposition*}{Proposition}

\theoremstyle{definition}

\newtheorem{definition}[theorem]{Definition}
\newtheorem*{definition*}{Definition}
\newtheorem{remark}[theorem]{Remark}
\newtheorem*{remark*}{Remark}

\newtheorem*{observation*}{Observation}
\newtheorem{question}[theorem]{Question}
\newtheorem*{question*}{Question}

\newcommand{\calE}{{\mathcal{E}}}
\newcommand{\calF}{{\mathcal{F}}}
\newcommand{\calG}{{\mathcal{G}}}

\newcommand{\calI}{{\mathcal{I}}}

\newcommand{\calP}{{\mathcal{P}}}
\newcommand{\calQ}{{\mathcal{Q}}}
\newcommand{\calR}{{\mathcal{R}}}

\newcommand{\calV}{{\mathcal{V}}}

\newcommand{\bbE}{{\mathbb{E}}}

\newcommand{\fra}{{\mathfrak{a}}}

\newcommand{\frc}{{\mathfrak{c}}}

\DeclareMathOperator{\dom}{dom}
\DeclareMathOperator{\ran}{ran}
\DeclareMathOperator{\restr}{\upharpoonright}

\DeclareMathOperator{\partialto}{{\overset{\text{part}}{\to}}}
\DeclareMathOperator{\length}{lh}

\newcommand{\simpleset}[1]{{\{{#1}\}}}

\newcommand{\set}[2]{{\{ {#1} \mid {#2} \}}}
\newcommand{\seq}[2]{{\langle {#1} \mid {#2} \rangle}}
\newcommand{\enumoperator}[4]{{{#1}_{#2}{#4} \dots {#4} {#1}_{#3}}}
\newcommand{\enum}[3]{{{#1}_{#2}, \dots, {#1}_{#3}}}
\newcommand{\enumzero}[2]{{\enum{#1}{0}{#2}}}
\newcommand{\enumone}[2]{{\enum{#1}{1}{#2}}}
\newcommand{\sizeof}[1]{{|{#1}|}}

\DeclareMathOperator{\extends}{{\mathbin{\leq}}}

\newcommand{\finseq}[1]{{^{<\omega}{#1}}}
\newcommand{\infseq}[1]{{^{\omega}{#1}}}
\newcommand{\finsubset}[1]{{[#1]^{<\omega}}}
\newcommand{\infsubset}[1]{{[#1]^{\omega}}}
\DeclareMathOperator{\Fin}{Fin}
\DeclareMathOperator{\Seq}{Seq}

\DeclareMathOperator{\spec}{spec}
\DeclareMathOperator{\aE}{\fra_{\text{\normalfont e}}}
\DeclareMathOperator{\av}{\fra_{\text{\normalfont v}}}

\newcommand{\functionspace}[2]{{^{#1}{#2}}}
\DeclareMathOperator{\bairespace}{{^\omega \omega}}

\title{Productivity of maximal eventually different families}

\author{Lukas Schembecker}
\address{Department of Mathematics, University of Hamburg, Bundesstraße 55, 20146 Hamburg, Germany}
\email{lukas.schembecker@uni-hamburg.de}

\begin{document}	
	\maketitle
	
	\begin{abstract}
		A maximal eventually different family is called $n$-productive if the product family $\calF^n$ is still maximal.
		We construct closed $n$-productive families separating these strengthenings of maximality at every $n \geq 1$.
		Furthermore, we show how to force and construct an even stronger type of $\calI_0$-productive family and discuss the relation of productivity to Van Douwen families.
	\end{abstract}

	\section{Introduction} \label{SEC_Introduction}
	
	Given two maximal eventually different (med) families of functions $\calF, \calG \subseteq \bairespace$, their product $\calF \times \calG$ (see Definition~\ref{DEF_Product}) is another eventually different (ed) family of functions, now taking values in the still countable set $\omega \times \omega$.
	An immediate natural question is whether and when such products are again maximal (see Remark~\ref{REM_ProductiveEquivalence}).
	Such products turn out to be very useful for the study of spectra of Van Douwen families (see Definition~\ref{DEF_VD}).
	Using such products in a gluing argument the author proved in \cite{Schembecker_2026} that the spectrum of Van Douwen families is closed under singular limits.
	In fact, Van Douwen families $\calV$ may be characterized as exactly those eventually different families such that $\calV \times \calF$ is maximal for any maximal eventually different family $\calF$ (see Proposition~\ref{PROP_VDEquivalence}).
	
	In particular Van Douwen families have the property that any finite product of $\calV$ with itself is still maximal.
	For $n \geq 1$ we call a med family $\calF$ $n$-productive if its $n$-fold product $\calF^n$ is still maximal.
	If this holds for every $n \geq 1$ we call $\calF$ finitely productive.
	In \cite{Raghavan_2010} Raghavan introduced the associated ideal $\calI_0(\calF)$ for an ed family $\calF$ (see Definition~\ref{DEF_AssociatedIdeal}).
	It is easy to see that, considering the finite products of $\calF$, we even get an associated increasing chain of ideals
	\[
		\calI_0(\calF) \subseteq \calI_0(\calF^2) \subseteq \calI_0(\calF^3) \subseteq \dots
	\]
	In this context Van Douwen families satisfy the even stronger property of $\calI_0(\calF) = \calI_0(\calF^n)$ for all $n \geq 1$, which we call finitely $\calI_0$-productive (see Definition~\ref{DEF_I0Productive}).
	
	While Van Douwen families provide natural examples of productive families, they are quite a strong strengthening of maximality, for example in terms of complexity, med families may be Borel \cite{HorowitzShelah_2024}, even closed \cite{Schrittesser_2016}, whereas Van Douwen families cannot even be analytic \cite{Raghavan_2010}.
	Productivity and $\calI_0$-productivity therefore give natural intermediate strengthenings between classical maximality and Van Douwenness.
	Such strengthenings of maximality below Van Douwenness are also interesting for another reason:
	It is an open question whether the minimal size of a med family $\aE$ may be consistently smaller than the minimal size of a Van Douwen family $\av$.
	Since tightness implies Van Douwenness, a forcing construction preserving some $\calF$ while destroying Van Douwen families would likely need another strengthening of maximality for $\calF$ weaker than Van Douwenness.
	
	In the next section we will introduce all notions we will consider in the paper, in particular productivity and $\calI_0$-productivity.
	We further prove some easy observations and reformulations of productivity and prove that Van Douwen families may be characterized precisely as all \textquoteleft universally\textquoteright\ productive families (see Proposition~\ref{PROP_VDEquivalence}).
	
	In the third section we will separate the notion of $\calI_0$-productivity from Van Douwenness.
	It turns out tallness of the associated ideal $\calI_0(\calF)$ precisely characterizes when a med family $\calF$ cannot be restricted to a Van Douwen family (see Proposition~\ref{PROP_RestrictionToVD}); in other words $\calF$ is nowhere Van Douwen.
	We then prove that under {\sf MA}($\sigma$-centered) any proper ideal $\calI$ containing $\Fin$ may be realized as the associated ideal of a finitely $\calI_0$-productive family (see Theorem~\ref{THM_ManyI0Productive}):
	\begin{theorem*}
		Assume {\sf MA}($\sigma$-centered). Let $\calI$ be any proper ideal containing $\Fin$.
		Then there is a finitely $\calI_0$-productive med family $\calF$ with $\calI_0(\calF) = \calI$.
	\end{theorem*}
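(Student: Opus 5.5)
The plan is to build $\calF=\set{f_\alpha}{\alpha<\frc}$ by a transfinite recursion of length $\frc$, interleaving two kinds of tasks, each handled by one application of {\sf MA}($\sigma$-centered). I use Raghavan's description of $\calI_0$: it is the ideal generated by $\Fin$ and by those $X$ carrying a function $g\colon X\to\omega$ (for $\calF^n$, into $\omega^n$) that is eventually different on $X$ from every member of the family. Fix bookkeeping enumerations, each task appearing cofinally often:
\begin{itemize}
\item $\seq{X_\alpha}{\alpha<\frc}$ of $\calI$;
\item $\seq{(n_\alpha,Y_\alpha,h_\alpha)}{\alpha<\frc}$ of all triples with $n_\alpha\geq 1$, $Y_\alpha\in\calI^+$ infinite, and $h_\alpha\colon Y_\alpha\to\omega^{n_\alpha}$.
\end{itemize}

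During the recursion I maintain two things. The first is the family $\calF_\alpha$ built so far, of size $<\frc$. The second is a collection of \emph{promises} $(X_\beta,g_\beta)$ for $\beta<\alpha$, each demanding that every future member of $\calF$ is eventually different from $g_\beta$ on $X_\beta$.

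\textbf{Type 1 step (realizing $X_\alpha\in\calI$).} If $X_\alpha$ is infinite, use {\sf MA} for a Hechler/Cohen-type $\sigma$-centered poset to find $g_\alpha\colon X_\alpha\to\omega$ eventually different on $X_\alpha$ from all of $\calF_\alpha$; fewer than $\frc$ dense sets suffice. Then add the promise $(X_\alpha,g_\alpha)$. Since every later function keeps the promise, $g_\alpha$ witnesses $X_\alpha\in\calI_0(\calF)\subseteq\calI_0(\calF^n)$. This gives $\calI\subseteq\calI_0(\calF)$.

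\textbf{Type 2 step (killing $(n,Y,h)$).} Add $n$ new functions $f^1,\dots,f^n$ such that the tuple $(f^1,\dots,f^n)$ agrees with $h$ at infinitely many points of $Y$. At the same time each $f^i$ must be eventually different from every member of $\calF_\alpha$, from the other new functions, and from $g_\beta$ on $X_\beta$ for every promise $\beta<\alpha$. The forcing uses conditions $(s^1,\dots,s^n,A,P)$ where:
\begin{itemize}
\item the $s^i$ are finite partial functions;
\item $A\subseteq\calF_\alpha$ is finite;
\item $P$ is a finite set of promises,
\end{itemize}
and extensions must avoid the values of members of $A$, of the other new functions, and of the $g_\beta$ on $X_\beta$ for $(X_\beta,g_\beta)\in P$, at all new points. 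Conditions with the same stems are compatible, so the poset is $\sigma$-centered.

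The key density argument is for ``agree with $h$ at some new point $k\in Y$ above $m$''. Given a condition with promise set $P$, the set $Y\setminus\bigcup_{\beta\in P}X_\beta$ is infinite, because $Y\in\calI^+$ and $\calI$ is an ideal containing $\Fin$. At such a point $k$ no promise applies, so I set $s^i(k)=h(k)_i$. I must check that this does not accidentally equal a value of some $f\in A$ at $k$; this is handled by choosing $k$ outside the finitely many points where such a clash is already fixed. Since each $f\in A$ is eventually different from $h$'s coordinates or not, I expect to need a slight reformulation: the agreement with $f\in A$ should be tolerated only finitely often. That is exactly what ``eventually different'' allows, so $A$ should only forbid agreement above a bound $m_A$ recorded in the condition. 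Ensuring the new functions remain eventually different from each other while hitting $h$ also works, by choosing coordinates only where $h(k)$ has pairwise distinct entries or by modifying $h$'s witness points; this is the step I expect to be the main technical obstacle and would handle by a short case split.

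\textbf{Verification.} Maximality is the case $n=1$, $Y=\omega$. For $\calI_0(\calF^n)\subseteq\calI$, suppose $Y\subseteq Y_1\cup\dots\cup Y_k$ with each $Y_i$ carrying a witness $h_i$ eventually different from $\calF^n$. If $Y\notin\calI$, then some $Y_i\in\calI^+$ since $\calI$ is an ideal. But $(n,Y_i,h_i)$ was handled at some type 2 step, contradicting that $h_i$ avoids $\calF^n$. Hence $\calI_0(\calF)=\calI_0(\calF^n)=\calI$ for all $n$, and properness of $\calI$ ensures the family is med rather than trivial.
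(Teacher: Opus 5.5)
Your overall architecture matches the paper's: a length-$\frc$ recursion under {\sf MA}($\sigma$-centered), generic functions $g_\beta$ on the sets of $\calI$ kept as side-condition promises, and killing every $\calI$-positive tuple. However, the Type 2 step as you formulate it cannot always be carried out, and your density argument breaks exactly there.

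You require all $n$ coordinates of $h$ to be caught by \emph{new} functions $f^1,\dots,f^n$, each eventually different from every member of $\calF_\alpha$. Suppose $f\in\calF_\alpha$ and $h=(f\restr Y,h_1)$ with $Y\in\calI^+$. Any new function agreeing with the first coordinate at infinitely many points of $Y$ then agrees with $f$ infinitely often, so the step is impossible. In forcing terms, once $f\in A$ there is no point $k\in Y$ with $h_0(k)\neq f(k)$, so the set of conditions hitting $h$ above a given bound is not dense. Your proposed reformulation, tolerating clashes below a bound $m_A$, does not help, because the hitting points are precisely the clash points. What your density argument really needs is $Y\cap E(h_i,f)\in\calI$ for every coordinate $i$ and every $f\in A$. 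Only then does removing these sets and the promise domains from the positive set $Y$ leave infinitely many usable points, and nothing in your construction guarantees this.

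The missing idea is the paper's old/new coordinate split. Process the coordinates one at a time, starting from $X_0=Y$. If some $f_i\in\calF_\alpha$ satisfies $X_i\cap E(h_i,f_i)\notin\calI$, reuse $f_i$ for coordinate $i$ and shrink to $X_{i+1}:=X_i\cap E(h_i,f_i)$. Otherwise declare $i$ new and set $X_{i+1}:=X_i$. The threshold must be $\calI$-positivity rather than infinitude, so that the final set $X_n$ stays positive and cannot be swallowed by finitely many promise domains. Then for every new coordinate $i$ and every $f\in\calF_\alpha$ you have $X_n\cap E(h_i,f)\in\calI$. So for any condition, the finitely many bad agreement sets together with the finitely many promise domains form a set in $\calI$, and infinitely many good points of $X_n$ remain. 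This is where the density argument actually works, and the old coordinates are automatically satisfied at these points because $X_n\subseteq E(h_i,f_i)$.

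Your second flagged issue also needs the paper's fix, not yours. Hitting $h$ only where its entries are pairwise distinct fails outright when, say, $h_0=h_1$ on $Y$. Instead, partition $X_n$ by the finitely many equality patterns among the new coordinates and pick an $\calI$-positive pattern class $X$. Then add only one new function per class of new coordinates that coincide on $X$, which may mean fewer than $n$ new functions. The new functions added are then pairwise distinct at every hitting point, so they stay eventually different from each other.
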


	This generalizes the author's realization of any proper $\calI$ containing $\Fin$ as associated ideal of some med family under {\sf CH} \cite{Schembecker_2026}.
	Further, we use a more forcing-centered argument here, which allows us to relax the assumption to {\sf MA}($\sigma$-centered).
	In the last two sections we will take a closer look at the definability of productive families.
	It turns out that while Van Douwen families cannot be analytic, we can have Borel finitely productive med families.
	Even better, we may separate the entire hierarchy of productivity by Borel witnesses (see Theorems~\ref{THM_BorelNProductive}~and~\ref{THM_BorelFinitelyProductive}).
	
	\begin{theorem*}
		For every $n \geq 1 $ there is a Borel med family $\calE_n$ which is $n$-productive, but not $(n+1)$-productive and there is a Borel finitely productive med family $\calE_{<\omega}$.
	\end{theorem*}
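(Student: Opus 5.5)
We build the families by modifying the known constructions of closed med families, following Schrittesser's closed med family \cite{Schrittesser_2016} and the Horowitz--Shelah approach \cite{HorowitzShelah_2024}. We work with the reformulation of $n$-productivity from Remark~\ref{REM_ProductiveEquivalence}: $\calF$ is $n$-productive iff for every $g \in \bairespace$ (coded as a function into $\omega^n$, i.e.\ an $n$-tuple $(g_1,\dots,g_n)$) there are $f_1,\dots,f_n \in \calF$ such that $f_i(k)=g_i(k)$ for all $i$ simultaneously, for infinitely many $k$.

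The plan is to use a closed family whose members encode, along a tree of finite approximations, "answers" to every potential test function. Each $f$ in the family consists of blocks (intervals $I_m$ of $\omega$), coded by a continuous/tree-like procedure, so that the family is the set of branches of a pruned tree and hence closed.

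For $\calE_n$ we arrange the coding so that any $n$ test functions $g_1,\dots,g_n$ can be matched on infinitely many common coordinates. To do this, we reserve blocks in which members of the family copy, in coordinated fashion, finite pieces $g\restr I_m$ determined by earlier values of the family members. Concretely, let $f$ be determined by a "code" real $x$ written into its early values. The later blocks then include infinitely often $g_i\restr I_m$, where $g_i$ is decoded from $x$. We give each code an $n$-slot structure so that $n$ family members sharing a code can jointly hit a given $n$-tuple. The eventual-difference requirement is maintained by making distinct members differ in their codes, and by forcing disagreement after the splitting point via an injective coding into the values.

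To refute $(n+1)$-productivity, we exhibit an explicit $(n+1)$-tuple $(h_1,\dots,h_{n+1})$, e.g.\ $h_i$ constant with pairwise distinct values, or more likely diagonal functions determined by block positions. We then show that no $n+1$ members can agree with all of them simultaneously infinitely often. This is a pigeonhole argument: at each coordinate $k$, the tree structure permits at most $n$ "free" values among any family of members, so $n+1$ simultaneous matches force two members to coincide on infinitely many $k$. Such coincidence is impossible for distinct members unless they are equal, and equal members cannot match distinct target values.

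For $\calE_{<\omega}$ we let the slot number grow, using block $m$ with $m$ slots, which gives $n$-productivity for all $n$ at once.

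The main obstacle is balancing these requirements: the closed family must remain maximal and ed while its combinatorial "width" is exactly $n$, so the limit on simultaneous agreement must be enforced without breaking maximality. We would first try the Schrittesser-style coding with a fixed bounded number of slots per block, and verify the upper bound by analysing the block values combinatorially.
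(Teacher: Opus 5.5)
There is a genuine gap. Your plan never supplies the mechanism that makes copying test functions compatible with eventual difference, and that is the whole difficulty.

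\textbf{Eventual difference on the copied blocks.} On the coordinates where a member copies a decoded test function $g$, its values are dictated by $g$, so your ``injective coding into the values'' is not in force there. Since $g$ is arbitrary, it may agree infinitely often with the uncopied values of another member, e.g.\ $g=\text{e}(x')$ for another code $x'$. Likewise, two members sharing a code that copy $g_i$ and $g_j$ agree infinitely often whenever $g_i$ and $g_j$ do. Forcing disagreement there would destroy exactly the agreement that maximality needs.

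The paper resolves this with the Horowitz--Shelah dichotomy. It is applied coordinate by coordinate on a shrinking support $B(\vec h)=X_0\supseteq\dots\supseteq X_n$, which is indexed by the requirement $\vec h$ and almost disjoint across requirements. For each $i$, one of two things happens:
\begin{itemize}
\item the set $T_{X_i}(h_i)$ of $\prec_{h_i}$-terminal points is infinite, and on it $h_i$ agrees with each $\text{e}(f)$ at most once, so copying is safe; or
\item a canonical $\prec_{h_i}$-chain shows that $h_i$ is already caught by $\text{e}(f_{h_i})$ on $X_{i+1}$, and nothing is copied for $h_i$.
\end{itemize}
One then passes to an infinite $C(\vec h)\subseteq X_n$ on which the equality pattern of the copyable $h_i$ is constant. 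One representative per class is copied onto distinct members attached to $\vec h$, and $\psi(\vec h)$ is chosen so as not to overwrite any coherent witness $\text{e}(f_{h_j})$. None of this appears in your sketch; you flag the balance between ed and maximality as ``the main obstacle'' and leave it open.

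\textbf{Failure of $(n+1)$-productivity.} Your refutation is not correct as stated: there is no bound of ``$n$ free values per coordinate'' among the members. What works is to take $x_0,\dots,x_n$ with pairwise distinct values and $x_i(k)\notin{}^k\omega$, so that any agreement with a member lies in its copy set. Almost disjointness of the copy supports then forces all $n+1$ members to come from one requirement. That requirement has only $n$ members, so two coincide, contradicting $x_i(k)\neq x_j(k)$.

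This needs copy supports indexed by the requirement and almost disjoint across requirements. With blocks $I_m$ shared by different codes, as your sketch suggests, both this pigeonhole argument and eventual difference between members with different codes fail.

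\textbf{Closedness.} Aiming for a closed family directly adds an obstacle the statement does not require. The paper only shows these families are arithmetic, and obtains closed versions afterwards from a general Borel-to-closed lemma.
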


	For $n = 1$, the construction corresponds to the construction of the Borel med family $\calE$ by Horowitz and Shelah \cite{HorowitzShelah_2024}.
	We give an overview of their construction roughly following the presentation by Schrittesser in \cite{Schrittesser_2016} and show that the original family $\calE$ is not even $2$-productive (see Lemma~\ref{LEM_HSNotSquare}).
	Then we give a precise argument how to generalize the construction to obtain a Borel $2$-productive, but not $3$-productive med family.
	The proof describes all ideas necessary for the subsequent generalization of the construction to any $n \geq 1$ (see Theorem~\ref{THM_BorelNProductive}) and finally a Borel finitely productive family (see Theorem~\ref{THM_BorelFinitelyProductive}).
	
	In the last section we lower the complexity of the Borel families of the previous section to obtain closed families with the same finite productivity properties (see Corollary~\ref{COR_ClosedProductive}) and show that the family $\calE_{<\omega}$ also separates productivity from $\calI_0$-productivity (see Lemma~\ref{LEM_StrictlyIncreasingI0}).
	We end with some questions regarding the associated chain and the definability of $\calI_0$-productive families.

	\section{Definitions and notations} \label{SEC_Definitions}

	\begin{definition} \label{DEF_Product}
		Let $\enumone{f}{n}$ be a finite tuple of functions with the same domain.
		Then we denote with $\enumoperator{f}{1}{n}{\times}$ the product of functions (with the same domain) defined by:
		\[
			(\enumoperator{f}{1}{n}{\times})(a) := (f_1(a), \dots, f_n(a))
		\]
		For a finite tuple of families of functions $\enumone{\calF}{n}$ with the same domain we write
		\[
			\enumoperator{\calF}{1}{n}{\times} := \set{\enumoperator{f}{1}{n}{\times}}{f_1 \in \calF_1, \dots, f_n \in \calF_n}.
		\]
		Further, we abbreviate $\smash{\underset{n \text{ times}}{\calF \times \dots \times \calF}}$ with $\calF^n$.
	\end{definition}

	\begin{definition} \label{DEF_ED}
		Let $A, B$ be countable sets and $f,g \in \functionspace{A}{B}$.
		We will always denote their equality set by $E(f,g) := \set{a \in A}{f(a) = g(a)}$.
		We say that $f$ and $g$ are eventually different iff $E(f,g)$ is finite.
		A family $\calF \subseteq \functionspace{A}{B}$ is called an eventually different (ed) family (of functions) iff all its members are pairwise eventually different.
		If $\calF$ is further a maximal with respect to inclusion among such families, then we say $\calF$ is a maximal eventually different (med) family (of functions).
	\end{definition}

	\begin{definition} \label{DEF_SpecED}
		We define the spectrum and cardinal invariant associated to maximal eventually different families as:
		\begin{align*}
			\spec(\aE) &:= \set{\sizeof{\calF}}{\calF \subseteq \bairespace \text{ is med}},\\
			\aE &:= \min \spec(\aE).
		\end{align*}
	\end{definition}

	Usually, we will have $A = B = \omega$, but we will also naturally consider $A \in \infsubset{\omega}$ and $A = B = \omega^n$.
	The subsequent definitions will be defined for $A = B = \omega$, but this will also implicitly give definitions for any countable $A$ and $B$. 
	For example, there would be no difference considering any other countable domain and range than $\omega$ in the definition of $\spec(\aE)$ and $\aE$.
	
	\begin{definition} \label{DEF_VD}
		Let $\calF$ be med.
		Then $\calF$ is called Van Douwen iff for every $A \in \infsubset{\omega}$ also
		\[
			\calF \restr A := \set{f \restr A}{f \in \calF}
		\]
		is a maximal eventually different family of functions.
	\end{definition}
	
	Equivalently, $\calF$ is also maximal with respect to infinite partial functions, i.e.\ $\calF$ is Van Douwen iff for all infinite partial functions $p:A \to \omega$, there is some $f \in \calF$ with $f =^\infty p$.
	
	\begin{definition} \label{DEF_SpecVD}
		We define the spectrum and cardinal invariant associated to Van Douwen families as:
		\begin{align*}
			\spec(\av) &:= \set{\sizeof{\calF}}{\calF \subseteq \bairespace \text{ is Van Douwen}},\\
			\av &:= \min \spec(\av).
		\end{align*}
	\end{definition}
	
	We can also characterize Van Douwen families by their associated ideals:

	\begin{definition} \label{DEF_AssociatedIdeal}
		Let $\calF$ be an eventually different family.
		Then its associated ideal is defined by
		\[
			\calI_0(\calF) := \set{A \in \infsubset{\omega}}{\calF \restr A \text{ is not maximal}} \cup \Fin
		\]
	\end{definition}

	If $\calF$ is maximal this is a proper ideal and $\calF$ is Van Douwen iff $\calI_0(\calF) = \Fin$.
	
	\begin{definition} \label{DEF_Productive}
		Let $\calF$ be a med family and $n \geq 1$.
		Then we say that $\calF$ is $n$-productive iff also $\calF^n$ is med.
		If $\calF$ is $n$-productive for all $n \geq 1$, we say that $\calF$ is finitely productive (or $<\!\!\omega$-productive).
		Further, $2$-productive is called square and $3$-productive is called cubic.
	\end{definition}

	We have the following obvious but useful rewriting of the definitions above:
	
	\begin{remark} \label{REM_ProductiveEquivalence}
		Let $\enumone{\calF}{n}$ be med.
		Then $\enumoperator{\calF}{1}{n}{\times}$ is med iff for all $\enumone{x}{n} \in \bairespace$, there are $f_1 \in \calF_1, \dots, f_n \in \calF_n$ such that $\bigcap_{i = 1}^n E(x_i, f_i)$ is infinite.
	\end{remark}
	
	\begin{proof}
		Follows immediately from $E(x_1 \times \dots \times x_n, f_1 \times \dots \times f_n) = \bigcap_{i = 1}^n E(x_i, f_i)$.
	\end{proof}

	\begin{proposition}\label{PROP_VDisProductive}
		Let $\calV$ be Van Douwen and $\calF$ be any med family.
		Then $\calV \times \calF$ is med.
		Further, if $\calF$ is Van Douwen, then $\calV \times \calF$ is also Van Douwen.
		In particular, every Van Douwen family is finitely productive.
	\end{proposition}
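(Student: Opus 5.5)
We verify maximality through Remark~\ref{REM_ProductiveEquivalence}. Since $\calV$ and $\calF$ are med, $\calV \times \calF$ is ed: if $(v,f) \neq (v',f')$ then $v \neq v'$ or $f \neq f'$, and $E(v\times f, v'\times f') \subseteq E(v,v') \cap E(f,f')$ is finite. For maximality, fix $x, y \in \bairespace$; we want $v \in \calV$ and $f \in \calF$ with $E(x,v) \cap E(y,f)$ infinite. We first use maximality of $\calF$ to find $f \in \calF$ with $A := E(y,f)$ infinite. Because $\calV$ is Van Douwen, $\calV \restr A$ is maximal on $A$, so there is $v \in \calV$ with $E(x \restr A, v \restr A) = E(x,v) \cap A$ infinite. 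This set is exactly $E(x,v) \cap E(y,f)$, which proves that $\calV \times \calF$ is med.

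For the second claim, suppose $\calF$ is also Van Douwen. We use the equivalent formulation via infinite partial functions stated after Definition~\ref{DEF_VD}: we must show that for every infinite $B \subseteq \omega$ and all $x, y \colon B \to \omega$ there are $v \in \calV$ and $f \in \calF$ with $E(x,v) \cap E(y,f)$ infinite. We repeat the argument above relative to $B$. Since $\calF \restr B$ is maximal, we obtain $f$ with $A := E(y, f\restr B) \subseteq B$ infinite. Since $\calV \restr A$ is maximal, we then obtain $v$ with $E(x,v) \cap A$ infinite. Hence $(\calV \times \calF) \restr B$ is maximal for every infinite $B$, so $\calV \times \calF$ is Van Douwen. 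We should also note that restricting the family to $B$ commutes with taking the product, so that the formulation with domain $B$ and codomain $\omega\times\omega$ is the right one.

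Finally, every Van Douwen $\calV$ is finitely productive, by induction on $n$: if $\calV^n$ is Van Douwen, then $\calV^{n+1} = \calV^n \times \calV$ is Van Douwen by the second claim, applied with the roles played by $\calV^n$ (on codomain $\omega^n$) and $\calV$. There is a mild subtlety here. The statement is phrased for families in $\bairespace$, but the paper explicitly allows arbitrary countable codomains, and the arguments above never use the codomain being $\omega$. So identifying $\omega^n$ with $\omega$ is harmless, and the coordinate grouping $\calV^{n+1} \cong \calV^n \times \calV$ is a trivial bijection on values that preserves equality sets.

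I expect no serious obstacle. The only points requiring care are checking that the ed property passes to products and this bookkeeping of codomains and groupings in the induction. The essential idea is the two-step choice: first use maximality of one factor to get an infinite equality set $A$, then use Van Douwenness of the other factor on $A$.
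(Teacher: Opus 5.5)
Your proof is correct and follows the paper's argument. You use maximality of $\calF$ to get $f$ with $A = E(y,f)$ infinite, then use Van Douwenness of $\calV$ on $A$, and rerun the same two-step choice for partial functions on a common infinite domain to get Van Douwenness of the product; your explicit induction via $\calV^{n+1} \cong \calV^n \times \calV$ just spells out the paper's ``in particular'' step.
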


	\begin{proof}
		Obviously, products of ed families are ed, so we only need to verify maximality using Remark~\ref{REM_ProductiveEquivalence}.
		Let $x_1, x_2 \in \bairespace$ be given.
		Since $\calF$ is med there is an $f_2 \in \calF$ with $f_2 =^\infty x_2$.
		Since $\calV$ is Van Douwen there is an $f_1 \in \calV$ with $f_1 =^\infty x_1 \restr E(f_2, x_2)$.
		Thus, $E(f_1, x_1) \cap E(f_2, x_2)$ is infinite, proving that $\calV \times \calF$ is med.
		Further, if $\calF$ is additionally Van Douwen, the same argument also works starting with infinite partial functions $x_1, x_2$ with the same domain, proving that $\calV \times \calF$ is Van Douwen.
	\end{proof}

	In fact, the previous proposition gives another unique characterization of Van Douwen families:

	\begin{proposition}\label{PROP_VDEquivalence}
		Let $\calF$ be med such that $\calF \times \calG$ is med for any other med family $\calG$.
		Then $\calF$ is Van Douwen.
	\end{proposition}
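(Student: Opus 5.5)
We argue by contraposition: assuming $\calF$ is med but not Van Douwen, we construct a med family $\calG$ such that $\calF \times \calG$ is not med. Since $\calF$ is not Van Douwen, there is some $A \in \infsubset{\omega}$ such that $\calF \restr A$ is not maximal. So there is a function $x_1 \colon A \to \omega$ with $E(f, x_1)$ finite for every $f \in \calF$. Extend $x_1$ arbitrarily to a total function on $\omega$ (say by $0$ off $A$). Then for every $f \in \calF$ the set $E(f,x_1) \cap A$ is finite.

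By Remark~\ref{REM_ProductiveEquivalence}, it suffices to find a med family $\calG$ and some $x_2 \in \bairespace$ such that $E(x_1,f) \cap E(x_2,g)$ is finite for all $f \in \calF$ and $g \in \calG$. Since the only control we have over $E(x_1,f)$ is on $A$, the natural goal is to arrange that $E(x_2, g) \subseteq^* A$ for every $g \in \calG$. Then $E(x_1,f) \cap E(x_2,g) \subseteq^* E(x_1,f) \cap A$, which is finite.

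For the construction, fix $x_2 \in \bairespace$. On $\omega \setminus A$ we must make every $g$ eventually disagree with $x_2$, while keeping $\calG$ maximal. Let $B := \omega\setminus A$. If $B$ is finite there is nothing to arrange, so assume $B$ is infinite. Take a med family $\calH$ of functions $A \to \omega$, say by Zorn's lemma with $x_2 \restr A \in \calH$. Also fix a single function $h \colon B \to \omega$ with $h(n) \neq x_2(n)$ for all $n \in B$, for instance $h(n) = x_2(n)+1$. Put
\[
\calG := \set{g \cup h}{g \in \calH}.
\]

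It remains to check three things.
\begin{itemize}
\item[(i)] $\calG$ is ed. Elements $g \cup h$ and $g' \cup h$ agree on all of $B$, so this fails whenever $B$ is infinite. We fix this by instead taking $\calH$ to be a med family of functions on $\omega$ such that every member disagrees with $x_2$ on $B$, via the modification described in (ii).
\item[(ii)] $\calG$ is maximal. Start from an arbitrary med family $\calH \subseteq \bairespace$ containing $x_2$. Let $\calG$ consist of $x_2$ together with the functions $g'$, for $g \in \calH \setminus \{x_2\}$, where $g'$ agrees with $g$ except that on the finite set $E(g,x_2)$ we change the values to fresh values avoiding $x_2$. Choose these fresh values coded injectively, e.g.\ $g'(n) = \langle n, \text{code of } g\rangle$ outside the range of $x_2$; this needs a coding into $\omega$ of size continuum, so instead we simply use values not yet used, which keeps the family ed because a finite modification changes equality sets only finitely. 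Maximality is preserved: any $y$ meeting $g$ infinitely often meets $g'$ infinitely often. Now every $g' \ne x_2$ satisfies $E(g',x_2)=\emptyset$.
\item[(iii)] Take $x_2$ itself as the test function. The member $x_2 \in \calG$ gives $E(x_2,x_2)=\omega$, and then $E(x_1,f)\cap\omega$ is infinite for some $f$, which breaks the argument.
\end{itemize}

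Point (iii) is the main obstacle: the test function must not belong to $\calG$ with large agreement off $A$. The remedy is to let $\calG$ contain a function equal to $x_2$ on $A$ only. Concretely, take a med family on $\omega$ containing $x_2' := x_2\restr A \cup h$, and apply the modification of (ii) relative to $x_2$ only on $B$. Then every member of $\calG$ agrees with $x_2$ only finitely often on $B$, so $E(x_2,g)\subseteq^* A$ as required. The key subtlety is ensuring that the finite modifications keep the family ed and maximal. Both hold because they are finite changes to each function.
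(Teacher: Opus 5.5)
Your reduction is correct and is the same as the paper's. With $x_1$ witnessing that $\calF\restr A$ is not maximal and $B=\omega\setminus A$, it suffices to find a med $\calG$ and a test function $x_2$ such that $E(x_2,g)\cap B$ is finite for every $g\in\calG$.

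The gap is in your final construction. Your $\mathcal{H}$ contains $x_2'=(x_2\restr A)\cup h$, not $x_2$. Since $x_2'$ differs from $x_2$ at every point of $B$, eventual difference from $x_2'$ gives no control over $E(g,x_2)\cap B$. For example, $g_0:=((x_2+1)\restr A)\cup(x_2\restr B)$ satisfies $E(g_0,x_2')=\emptyset$, so it may belong to $\mathcal{H}$, but $E(g_0,x_2)\cap B=B$. So the changes are not finite, and your closing justification (``finite changes preserve ed-ness and maximality'') does not apply.

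In fact, no change confined to $B$ can rescue an arbitrary $\mathcal{H}$. Suppose $\calG$ is med and every member meets $x_2$ only finitely often on $B$. Then $\calG\restr A$ must be maximal: otherwise, for any $z\colon A\to\omega$ eventually different from all of $\calG\restr A$, the function $z\cup(x_2\restr B)$ would be eventually different from all of $\calG$. Modifying only on $B$ leaves $\calG\restr A=\mathcal{H}\restr A$, and a med family on $\omega$ need not have a maximal restriction to $A$.

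The missing idea is to make $\calG\restr A$ maximal by construction, and to give distinct members pairwise eventually different $B$-parts that avoid $x_2$. Your first attempt $\set{g\cup h}{g\in\mathcal{H}}$, with $\mathcal{H}$ med on $\functionspace{A}{\omega}$, was one step away. Replace the single $h$ by pairwise eventually different functions $h_g\colon B\to\omega$, one for each $g\in\mathcal{H}$, with $h_g(n)\neq x_2(n)$ for all $n\in B$. For instance, choose $x_2\restr B$ constantly $0$ and take an ed family of size $\frc$ with positive values. Then:
\begin{enumerate}[$\circ$]
\item for $g\neq g'$, $E(g\cup h_g,g'\cup h_{g'})=E(g,g')\cup E(h_g,h_{g'})$ is finite;
\item maximality is inherited from $\mathcal{H}$;
\item $E(x_2,g\cup h_g)\subseteq A$.
\end{enumerate}
This is exactly the paper's family $g_\alpha=p_\alpha\cup q_\alpha$.
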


	\begin{proof}
		Assume $\calF$ is such a maximal eventually different family and assume it was not Van Douwen.
		So choose an infinite partial function $p : A \to \omega$ such that $\calF \restr A \cup \simpleset{p}$ is ed.
		W.l.o.g.\ we may assume that $B = \omega \setminus A$ is also infinite.
		Let $\calP := \set{p_\alpha}{\alpha < \frc}$ be a med family on $\functionspace{A}{\omega}$, $\calQ := \set{q_\alpha}{\alpha < \frc}$ be an ed family on $\functionspace{B}{\omega}$ and $q:B \to \omega$ with $q_\alpha(n) \neq q(n)$ for all $n \in B$ and $\alpha < \frc$.
		Finally, define a family of functions $\calG := \set{g_\alpha}{\alpha < \frc}$ on $\bairespace$ by
		\[
			g_\alpha := p_\alpha \cup q_\alpha.
		\]
		Clearly, $\calG$ is ed, but also med.
		For if $h \in \bairespace$, by maximality of $\calP$ there is $\alpha < \frc$ with $p_\alpha =^\infty h \restr A$.
		But then also $g_\alpha =^\infty h$.
		But now, let $x,y \in \bairespace$ extend $p$ and $q$, respectively.
		Then for any $f \in \calF$ and $\alpha < \frc$ we have
		\[
			E(x, f) \cap E(y, g_\alpha) \subseteq (A \cap E(p, f)) \cup (B \cap E(q, g_\alpha)).
		\]
		But $E(p,f)$ and $E(q, g_\alpha)$ are finite and empty, respectively.
		Hence, by Remark~\ref{REM_ProductiveEquivalence} $\calF \times \calG$ is not maximal.
	\end{proof}

	Note that we always have $\calI_0(\calF) \cup \calI_0(G) \subseteq \calI_0(\calF \times \calG)$,
	so the previous proof basically boils down to:
	Assume there is an infinite subset $A \in \calI_0(\calF)$.
	Then we can construct a med family $\calG$ with $\omega \setminus A \in \calI_0(\calG)$.
	Hence, by the above inclusion $\calI_0(\calF \times \calG)$ cannot be a proper ideal, so that $\calF \times \calG$ cannot be maximal.
	Note that this characterization shows that Van Douwen families satisfy an even stronger productivity property:
	
	\begin{definition}\label{DEF_I0Productive}
		Let $\calF$ be a med family and $n \geq 1$.
		Then we say that $\calF$ is $n$-$\calI_0$-productive iff $\calI_0(\calF^n) = \calI_0(\calF)$.
		If $\calF$ is $n$-$\calI_0$-productive for all $n \geq 1$, we say that $\calF$ is finitely-$\calI_0$-productive ($<\!\!\omega$-$\calI_0$-productive).
	\end{definition}

	Clearly, $n$-$\calI_0$-productive implies $n$-productive and in general for any med family $\calF$ we have an associated increasing chain of ideals
	\[
		\calI_0(\calF) \subseteq \calI_0(\calF^2) \subseteq \calI_0(\calF^3) \subseteq \dots
	\]
	But by Remark~\ref{PROP_VDisProductive} for a Van Douwen family $\calF$ its entire associated chain is $\Fin$, so we get

	\begin{corollary}\label{COR_VDisI0Productive}
		Van Douwen families are finitely $\calI_0$-productive.
	\end{corollary}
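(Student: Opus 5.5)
The plan is to show that for a Van Douwen family $\calF$ every finite power $\calF^n$ is again Van Douwen, and then read off the associated chain. We argue by induction on $n \geq 1$, using the second clause of Proposition~\ref{PROP_VDisProductive}: if $\calV$ and $\calF$ are both Van Douwen, then $\calV \times \calF$ is Van Douwen. The case $n = 1$ is the hypothesis. For the inductive step, assume $\calF^n$ is Van Douwen. We identify $\calF^{n+1}$ with $\calF^n \times \calF$, where $\calF^n$ is regarded as a family of functions $\omega \to \omega^n$. By the remark after Definition~\ref{DEF_SpecED}, the definitions are insensitive to the countable range, so Proposition~\ref{PROP_VDisProductive} applies verbatim with $\calV := \calF^n$. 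Hence $\calF^{n+1}$ is Van Douwen.

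The one point needing care is this identification. The map $(a_1,\dots,a_{n+1}) \mapsto ((a_1,\dots,a_n),a_{n+1})$ is a bijection $\omega^{n+1} \to \omega^n \times \omega$. It commutes with taking products of functions, so it preserves equality sets $E(\cdot,\cdot)$. It also commutes with restriction to any $A \in \infsubset{\omega}$. Consequently it preserves being ed, med, and Van Douwen. We should also check that the proof of Proposition~\ref{PROP_VDisProductive} never uses that the range is $\omega$. It only uses maximality for partial functions into the relevant range, which holds for any countable range.

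Having shown that each $\calF^n$ is Van Douwen, the remark following Definition~\ref{DEF_AssociatedIdeal} gives $\calI_0(\calF^n) = \Fin$ for every $n \geq 1$; this remark is stated for range $\omega$ but holds for any countable range by the same reasoning. In particular $\calI_0(\calF^n) = \Fin = \calI_0(\calF)$ for all $n$. This is exactly finite $\calI_0$-productivity in the sense of Definition~\ref{DEF_I0Productive}.

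I expect no real obstacle. The only step requiring attention is justifying that the notions transfer along the range identification $\omega^n \times \omega \cong \omega^{n+1}$, which the paper already sanctions.
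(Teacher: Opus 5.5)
Your proposal is correct and follows the paper's argument. The paper derives the corollary directly from the second clause of Proposition~\ref{PROP_VDisProductive} (a product of Van Douwen families is Van Douwen), so every power $\calF^n$ is Van Douwen and the whole associated chain equals $\Fin$; your induction via $\calF^{n+1} \cong \calF^n \times \calF$, together with the check that the notions transfer across the countable range, just makes explicit what the paper leaves implicit.
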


	\section{Many $\calI_0$-productive families}
	
	Since Van Douwen families always exist and are finitely $\calI_0$-productive, there always are finitely $\calI_0$-productive med families.
	However, note that in the sense of complexity Van Douwenness is a rather strong assumption. In fact, there are closed maximal eventually different families whereas Van Douwen families cannot even be analytic, so there is a huge complexity gap between the two notions.
	So for this section, the goal will be to separate productivity and Van Douwenness by constructing many different productive families which do not come from a Van Douwen family.
	This can be made more precise by the following observation:
	
	\begin{proposition}\label{PROP_ProductiveIfRestrictionIsProductive}
		Let $\calF$ med, $n \geq 1$ and assume there is an $A \in \infsubset{\omega}$ such that $\calF \restr A$ is $n$-productive.
		Then $\calF$ is $n$-productive.
	\end{proposition}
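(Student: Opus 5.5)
We argue directly through Remark~\ref{REM_ProductiveEquivalence}. Two facts need checking. First, $\calF^n$ is eventually different. Second, for every $x_1, \dots, x_n \in \bairespace$ there are $f_1, \dots, f_n \in \calF$ with $\bigcap_{i=1}^n E(x_i, f_i)$ infinite.

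The first fact is immediate and does not use $A$. If $f_1 \times \dots \times f_n \neq g_1 \times \dots \times g_n$, then $f_i \neq g_i$ for some $i$. Since $\calF$ is ed, $E(f_i, g_i)$ is finite. The equality set of the two products is $\bigcap_j E(f_j, g_j) \subseteq E(f_i, g_i)$, so it is finite too.

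For maximality, fix $x_1, \dots, x_n \in \bairespace$ and consider the restrictions $x_i \restr A \in \functionspace{A}{\omega}$. By hypothesis $(\calF \restr A)^n$ is med on $\functionspace{A}{\omega}$. Apply Remark~\ref{REM_ProductiveEquivalence} there, with domain $A$; the paper notes its definitions transfer to arbitrary countable domains. This yields $f_1, \dots, f_n \in \calF$ such that $\bigcap_{i=1}^n E(x_i \restr A, f_i \restr A)$ is infinite. Since $E(x_i \restr A, f_i \restr A) = E(x_i, f_i) \cap A$, the set $\bigcap_{i=1}^n E(x_i, f_i)$ contains an infinite set, so it is infinite. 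Hence $\calF^n$ is med.

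The only point needing care is bookkeeping, not a real obstacle. The phrase "$\calF \restr A$ is $n$-productive" presupposes that $\calF \restr A$ is med, which in particular means it is an ed family on $A$. We also use $(\calF \restr A)^n = \set{(f_1 \times \dots \times f_n) \restr A}{f_i \in \calF}$ to pull witnesses from the restricted product back to members of $\calF$. If distinct members of $\calF$ happen to have the same restriction to $A$, we simply choose any preimages; the argument is unaffected. No earlier result beyond Remark~\ref{REM_ProductiveEquivalence} is needed.
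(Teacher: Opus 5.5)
Your proof is correct and is essentially the paper's argument: restrict the $x_i$ to $A$, use maximality of $(\calF \restr A)^n$ to obtain $f_i \in \calF$ with $\bigcap_{i} E(x_i \restr A, f_i \restr A)$ infinite, and conclude via $E(x_i \restr A, f_i \restr A) \subseteq E(x_i, f_i)$ and Remark~\ref{REM_ProductiveEquivalence}. Your caveat about distinct members of $\calF$ sharing a restriction to $A$ cannot actually occur, because $\calF$ is ed and $A$ is infinite, but it is harmless.
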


	\begin{proof}
		Let $n < \omega$ and $\enumzero{x}{n-1} \in \bairespace$.
		Since $\calF \restr A$ is $n$-productive, $(\calF \restr A)^n$ is med, so choose $\enumzero{f}{n-1} \in \calF$ such that $\bigcap_{i \in n} E(f_i \restr A, x_i \restr A)$ is infinite.
		But then also $\bigcap_{i \in n} E(f_i, x_i)$ is infinite, so by Remark~\ref{REM_ProductiveEquivalence} $\calF$ is $n$-productive.
	\end{proof}

	In particular, we get that any med $\calF$ which can be restricted to a Van Douwen family is finitely productive.
	Note that we can precisely characterize such families by tallness of $\calI_0$.
	Remember that an ideal $\calI$ is tall iff for all $A \in \infsubset{\omega}$ there is a $B \in \calI$ with $A \cap B$ infinite, or equivalently, $\infsubset{A} \cap \calI$ is non-empty.
	For an ideal $\calI$ and $A \in \infsubset{\omega}$ denote $\calI \restr A := \set{B \cap A}{B \in \calI}$.

	\begin{proposition}{\label{PROP_RestrictionToVD}}
		Let $\calF$ be med.
		Then $\calF$ can be restricted to a Van Douwen family iff $\calI_0(\calF)$ is not a tall ideal.
	\end{proposition}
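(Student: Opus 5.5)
The plan is to unfold both sides of the equivalence into statements about restrictions and then match them up. Here ``$\calF$ can be restricted to a Van Douwen family'' means that there is $A \in \infsubset{\omega}$ such that $\calF \restr A$ is a Van Douwen family on $\functionspace{A}{\omega}$, where we identify $A$ with $\omega$ via its increasing enumeration. The statement ``$\calI_0(\calF)$ is not tall'' means that there is $A \in \infsubset{\omega}$ with $\infsubset{A} \cap \calI_0(\calF) = \emptyset$, i.e.\ every infinite subset of $A$ lies outside $\calI_0(\calF)$. Note that restrictions compose: for $B \subseteq A$ we have $(\calF \restr A) \restr B = \calF \restr B$. I will use this freely.

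For the direction ``restricts to Van Douwen $\Rightarrow$ not tall'', fix $A$ with $\calF \restr A$ Van Douwen. Let $B \in \infsubset{A}$. By Definition~\ref{DEF_VD}, applied to the family $\calF \restr A$ and its infinite subset $B$, the family $(\calF \restr A)\restr B = \calF \restr B$ is maximal eventually different on $\functionspace{B}{\omega}$. Hence $B \notin \calI_0(\calF)$ by Definition~\ref{DEF_AssociatedIdeal}. So $\infsubset{A} \cap \calI_0(\calF) = \emptyset$, and $\calI_0(\calF)$ is not tall.

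For the converse, suppose $A \in \infsubset{\omega}$ witnesses non-tallness. I then check that $\calF \restr A$ is Van Douwen.
\begin{enumerate}[(i)]
\item \emph{Maximality.} Since $A$ is infinite and $A \notin \calI_0(\calF)$, the family $\calF \restr A$ is maximal.
\item \emph{Eventual difference.} The family $\calF \restr A$ is ed because equality sets only shrink under restriction. The one point needing care is that distinct $f, g \in \calF$ might have equal restrictions $f \restr A = g \restr A$. This cannot happen: it would make $E(f,g) \supseteq A$ infinite, contradicting that $\calF$ is ed. So restriction is injective on $\calF$, and $\calF \restr A$ is a genuine ed family indexed like $\calF$.
\item \emph{Van Douwen property.} For every $B \in \infsubset{A}$ we have $B \notin \calI_0(\calF)$, so $(\calF\restr A)\restr B = \calF \restr B$ is maximal. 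This is exactly Definition~\ref{DEF_VD} for $\calF \restr A$.
\end{enumerate}

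There is no real obstacle here. The only points needing attention are the bookkeeping identity $(\calF\restr A)\restr B = \calF\restr B$ and the convention that Van Douwenness of a family on an arbitrary countable domain $A$ is defined by transferring along a bijection $A \cong \omega$. That transfer is harmless, as the paper already notes after Definition~\ref{DEF_SpecED}.
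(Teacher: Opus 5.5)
Your proof is correct and is essentially the paper's argument. The paper packages your observation $(\calF\restr A)\restr B = \calF\restr B$ as the identity $\calI_0(\calF\restr A) = \calI_0(\calF)\restr A$. It then uses that a med family is Van Douwen iff its associated ideal is $\Fin$, which you verify directly in both directions.
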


	\begin{proof}
		It is easy to check that we always have $\calI_0(\calF \restr A) = \calI_0(\calF) \restr A$.
		Hence, $\calI_0(\calF)$ is not tall iff there is an $A \in \infsubset{\omega}$ with $A \cap B$ finite for all $B \in \calI_0(\calF)$ iff there is an $A \in \infsubset{\omega}$ with $\calI_0(\calF \restr A) = \calI_0(\calF) \restr A = \Fin$ iff there is an $A \in \infsubset{\omega}$ such that  $\calF \restr A$ is Van Douwen.
	\end{proof}

	For the rest of this section we will show that we may construct med families with tall $\calI_0$-ideal, hence they cannot be restricted to Van Douwen families.
	In fact, we will show that any proper ideal containing $\Fin$ may be realized as the $\calI_0$-ideal of some finitely productive med family.
	Furthermore, they will even be finitely $\calI_0$-productive.
	This directly generalizes the realization of any proper ideal as the $\calI_0$-ideal of some med family by the author in \cite{Schembecker_2026}.
	
	\begin{theorem}\label{THM_ManyI0Productive}
		Assume {\sf MA}($\sigma$-centered). Let $\calI$ be any proper ideal containing $\Fin$.
		Then there is a finitely $\calI_0$-productive med family $\calF$ with $\calI_0(\calF) = \calI$.
	\end{theorem}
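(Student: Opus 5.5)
We construct $\calF = \set{f_\alpha}{\alpha < \frc}$ by a transfinite recursion of length $\frc$, using {\sf MA}($\sigma$-centered) at each step to add one new function. First fix the bookkeeping. Enumerate all pairs $(n, \vec x)$ with $n \geq 1$ and $\vec x = (x_1,\dots,x_n)$ an $n$-tuple of partial functions $x_i : A \to \omega$, where $A \in \infsubset{\omega}$ is $\calI$-positive, i.e.\ $A \notin \calI$. Also enumerate all $B \in \calI$. Each of these enumerations has length $\frc$. Every object is handled cofinally often, or at least once after the relevant stage.

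The requirements at stage $\alpha$, with $\calF_\alpha = \set{f_\beta}{\beta<\alpha}$ of size $<\frc$, are as follows.

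\emph{Eventual difference.} The new $f_\alpha$ is eventually different from every $f_\beta$ with $\beta<\alpha$.

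\emph{Positive-set maximality for products.} If the current task is $(n,\vec x)$ with $\operatorname{dom} = A \notin \calI$, and no $\vec f \in \calF_\alpha^n$ already satisfies $\bigcap_i E(x_i,f_i)$ infinite, then we add up to $n$ new functions $f_{\alpha,1},\dots,f_{\alpha,n}$ (we may instead take $f_{\alpha,1}=\dots$ in a single step) such that $\bigcap_{i} E(x_i, f_{\alpha,i})$ is infinite. This gives $\calI_0(\calF^n) \subseteq \calI$ for every $n$.

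\emph{Keeping sets of $\calI$ non-maximal.} For each $B \in \calI$ we want a witness $p_B : B \to \omega$ that is eventually different from every $f \restr B$. Since there are $\frc$ many $B$, we reserve witnesses along the way. When $B$ appears we choose $p_B$ eventually different from $\calF_\alpha \restr B$, which is possible because $|\calF_\alpha|<\frc$ and $\mathsf{MA}(\sigma\text{-centered})$ gives $\mathfrak{a}_e$-type diagonalization via a $\sigma$-centered forcing. Every later $f_\gamma$ must then avoid $p_B$ on $B$ mod finite. This gives $\calI \subseteq \calI_0(\calF)$. Combined with the chain $\calI_0(\calF)\subseteq \calI_0(\calF^n)\subseteq\calI$, this yields $\calI_0(\calF^n)=\calI$ for all $n$, which is exactly finite $\calI_0$-productivity with $\calI_0(\calF)=\calI$. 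Maximality of $\calF$ itself is the case $n=1$, $A=\omega\notin\calI$, since $\calI$ is proper.

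The key step is the forcing for the product requirement. Given $\vec x$ on $A\notin\calI$, the family $\calF_\alpha$ with $|\calF_\alpha|<\frc$, and the reserved witnesses $\set{(B,p_B)}{B\in\calR_\alpha}$ with $|\calR_\alpha|<\frc$ and all $B \in \calI$, we force with finite partial conditions. A condition is a pair $(s,F)$ where $s$ is a finite tuple of partial functions $s_1,\dots,s_n$ (finite approximations to the $f_{\alpha,i}$) and $F$ is a finite set of promises: old functions $f_\beta$ and reserved pairs $(B,p_B)$. An extension may not add new agreements of $s_i$ with anything in $F$. This is $\sigma$-centered, since conditions with the same $s$ are compatible.

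The dense sets fall into three groups. The first makes each $f_{\alpha,i}$ total. The second ensures that for each $k$ there is $m>k$ in $A$ with $s_i(m)=x_i(m)$ for all $i$. The third handles the promises. The real obstacle is density for the second group. Given $(s,F)$, we need infinitely many $m\in A$ at which setting $s_i(m)=x_i(m)$ violates no promise. The old functions in $F$ cause only finitely many conflicts per $f_\beta$, but only if $x_i$ agrees with $f_\beta$ only finitely often, or more carefully only on a set where we can skip. The reserved $p_B$ block the points $m\in B$ where $x_i(m)=p_B(m)$, and this set lies inside $B$. Since $F$ is finite, the blocked points lie inside the finite union of the $B$'s in $F$, which is in $\calI$, together with the sets $\bigcap_i E(x_i,f_\beta)$, which are finite by the case assumption that no old tuple works. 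Hmm, actually blocking only needs one $i$. A point $m$ is blocked by $f_\beta$ if $x_i(m)=f_\beta(m)$ for some $i$, and that is harmless, since promises only forbid new agreements with the previous functions in the sense of eventual difference. So we allow finitely many violations by relaxing the promise to mean "no new agreements beyond $|s|$", and instead weaken the requirement for $f_\beta$ to eventual difference. Each single agreement point is then allowed only finitely often, which requires $E(x_i,f_\beta)$ to be finite. To arrange this, we first restrict $A$ to an infinite $A'\subseteq A$, $A'\notin\calI$, off which this holds. This is the delicate point, and if it fails we fall back on the case assumption to find an old tuple directly. Since $A\notin\calI$ and the blocked region is in $\calI$, $A$ minus that region is infinite, which gives density. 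Genericity over $<\frc$ dense sets then completes the stage.
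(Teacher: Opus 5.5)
Your proposal has a genuine gap exactly at the point you call delicate, and the fallback you offer does not close it.

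Your forcing makes \emph{all} $n$ coordinates new functions. But the case assumption (no tuple in $\calF_\alpha^n$ has infinite common agreement with $\vec x$) says nothing about the individual sets $E(x_i,f_\beta)$. These can be $\calI$-positive, or even all of $A$.

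For instance, take $n=2$ and $x_1 = f_0 \restr A$ for some $f_0\in\calF_\alpha$. Let $x_2$ be eventually different on $A$ from every member of $\calF_\alpha$ (possible since $|\calF_\alpha|<\frc$). Then:
\begin{enumerate}
\item no old pair works;
\item there is no $A'\subseteq A$ with $A'\notin\calI$ and $A'\cap E(x_1,f_0)$ finite;
\item no pair of new functions works either, since any new $f_{\alpha,1}$ must be eventually different from $f_0$, so $E(x_1,f_{\alpha,1})$ is finite.
\end{enumerate}
So the claim that you can otherwise fall back on the case assumption to find an old tuple is false. Finite agreement is also the wrong target.

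What is missing is mixing old and new functions. Go through the coordinates one at a time, starting from $X_0 = A$. If some old $f$ satisfies $X_i\cap E(x_i,f)\notin\calI$, use $f$ for coordinate $i$ and thin to $X_{i+1} := X_i\cap E(x_i,f)$. Otherwise keep $X_{i+1}:=X_i$ and call $i$ new. For each new coordinate $i$ and each old $f$ you then have $X_n\cap E(x_i,f)\in\calI$.

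This $\calI$-smallness, not finiteness, is all the density argument needs. A condition promises only finitely many old functions and finitely many $p_B$, so the forbidden points form a set in $\calI$, and that set cannot cover an $\calI$-positive subset of $X_n$. There is also no need to allow finitely many violations. The generic function never copies at a forbidden point, so its agreement with a promised $f$ is confined to the finite part already decided when $f$ was promised.

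Second, nothing in your conditions keeps the new functions eventually different from \emph{each other}. Suppose two new coordinates $i\neq j$ have $x_i(m)=x_j(m)$ for infinitely many copy points $m$. Then $f_{\alpha,i}$ and $f_{\alpha,j}$ agree infinitely often. If $x_i$ and $x_j$ agree on one $\calI$-positive part of $X_n$ and disagree on another, neither taking the two functions equal (your parenthetical) nor keeping them distinct works on all of $X_n$.

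The fix has three parts:
\begin{enumerate}
\item Partition $X_n$ according to the finitely many equality patterns of $(x_i)_{i \text{ new}}$ and choose an $\calI$-positive class $X$.
\item Let the new coordinates that agree on $X$ share a single new function.
\item Require in the conditions that the remaining new functions take pairwise distinct values at every point; on $X$ the copied values already satisfy this.
\end{enumerate}

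These two ingredients are precisely the old/new coordinate thinning and the equivalence-class step of the paper's proof. With them, the rest of your outline goes through: reserving $p_B$ for $B\in\calI$, the $\sigma$-centered finite-condition forcing, meeting $<\frc$ dense sets via {\sf MA}($\sigma$-centered), and the bookkeeping.
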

	
	Note that this implies that $\calI_0(\calF^n) = \calI$ for all $n \geq 1$, so in other words its associated increasing chain of ideals is constantly $\calI$.
	Compared to the proof in \cite{Schembecker_2026} we take a more forcing-centered approach here.
	Fix a proper ideal $\calI$ containing $\Fin$, we will iteratively construct an increasing sequence of ed families $\seq{\calF_\alpha}{\alpha < \frc}$ with $\sizeof{\calF_\alpha} < \frc$ and a sequence of partial functions $\seq{p_\alpha:A_\alpha \to \omega}{\alpha < \frc}$, where $\seq{A_\alpha}{\alpha < \frc}$ enumerates $\calI \setminus \Fin$, such that for all $\alpha, \beta < \frc$ we have
	\[
		\calF_\alpha \cup \simpleset{p_\beta} \text{ is an ed family}.
	\]
	Then $\calF = \bigcup_{\alpha < \frc} \calF_\alpha$ is ed and the $p_\alpha$ witness that $\calI \subseteq \calI_0(\calF)$.
	By choosing $f_\alpha$ generic enough we will also have that $\calI_0(\calF^n) \subseteq \calI$ for all $n \geq 1$, which shows that $\calF$ is finitely $\calI_0$-productive.
	First, we discuss the forcing adding $p_\alpha$.
	
	\begin{definition}
		Let $A \in \infsubset{\omega}$ and $\calF \subseteq \bairespace$, then the forcing $\bbE(\calF \restr A)$ is given by conditions $(s, E)$, where $s:A \partialto \omega$ is a finite partial function and $E \in \finsubset{\calF}$.
		For $(s, E), (t, F) \in \bbE(\calF \restr A)$ we define $(t, F) \extends (s, E)$ if $F \supseteq E$, $t \supseteq s$ and for all $k \in \dom(t \setminus s)$ and $f \in E$ we have $t(k) \neq f(k)$.
	\end{definition}
	
	\noindent It is well-known that $\bbE(\calF \restr A)$ is $\sigma$-centered, so given $\calF_\alpha$ and $A_\alpha$ by {\sf MA}($\sigma$-centered) we may choose $G$ generic for the dense sets
	\[
		D_k := \set{(s,E)}{k \in \dom(s)},
	\]
	where $k \in A$ and
	\[
		D_f := \set{(s,E)}{f \in E},
	\]	
	where $f \in \calF$.
	Then $p_\alpha := \bigcup\set{s}{(s, E) \in G}$ is a function $p_\alpha:A \to \omega$ eventually different from all $f \in \calF$.
	Next, let $\seq{\vec{h}_\alpha}{\alpha < \frc}$ enumerate all tuples $\vec{h} = (\enumzero{h}{n-1})$ of partial functions $h_i:B \to \omega$ for some common domain $B \notin \calI$.
	In order to ensure $B \notin \calI_0(\calF^n)$, and thus eventually $\calI_0(\calF^n) \subseteq \calI$, we will need to have functions $\enumzero{f}{n-1} \in \calF_{\alpha + 1}$ such that
	\[
		\bigcap_{i\in n} E(h_i, f_i)
	\]
	is infinite (see Definition~\ref{DEF_AssociatedIdeal} and Remark~\ref{REM_ProductiveEquivalence}).
	So let $\vec{h}_\alpha = (\enumzero{h}{n-1})$, then we will define a decreasing sequence of infinite subsets of $B$ not in $\calI$
	\[
		B =: X_0 \supseteq X_1 \supseteq \dots \supseteq X_n.
	\]
	So assume $X_i \notin \calI$ has been defined.
	If there is an $f_i \in \calF_{\alpha}$ such that $X_i \cap E(h_i, f_i) \notin \calI$, then we call $i$ an old coordinate witnessed by $f_i$ and set $X_{i+1} := X_i \cap E(h_i, f_i)$.
	Otherwise, we call $i$ a new coordinate and set $X_{i+1} := X_i$.
	At the end, we have $X_n \notin \calI$ and we let $S \subseteq n$ be the set of all new coordinates.
	By construction for all $i \in S$,  $f \in \calF_\alpha$ we have
	\[
		X_n \cap E(h_i, f) \in \calI.
	\]
	Now, define an equivalence relation $\equiv$ on $X_n$ by 
	\[
		k \equiv l \quad \text{iff} \quad \forall i, j \in S \text{ we have } (h_i(k) = h_j(k) \iff h_i(l) = h_j(l)).
	\]
	Since $S$ is finite, there are only finitely many equivalence classes, so because $X_n \notin \calI$ we may choose an equivalence class $X \subseteq X_n$ with $X \notin \calI$.
	Now, we define a second equivalence relation $\sim$ on $S$ by
	\[
		i \sim j \quad \text{iff} \quad \forall k \in X \text{ we have } h_i(k) = h_j(k).
	\]
	Finally, let $S/\!\!\sim\ = m = \set{a}{a \in m}$, for every $a \in m$ choose a representative $i_a \in S$ and denote with $[i]$ the equivalence class of $i$.
	Then, by construction for all $a \neq b \in m$ and $k \in X$ we have
	\[
		h_{i_a}(k) \neq h_{i_b}(k).
	\]
	If $S = \emptyset$, then $\calF_\alpha$ already diagonalizes against $\vec{h}_\alpha$, so assume $S \neq \emptyset$.
	Then we will add $m$ new functions $g_a:\omega\to\omega$ to $\calF_\alpha$, one for every equivalence class $a \in m$.
	
	\begin{definition}
		Let $J \neq \emptyset$ be finite and $\calF$ be a set of infinite partial functions from $\omega$ to $\omega$.
		Then the forcing $\bbE_J(\calF)$ is given by conditions $(s, E)$, where $s:J\times\omega \partialto \omega$ is a finite partial function, $E \in \finsubset{\calF}$, $\dom(s) = J \times D$ for some $D \in \finsubset{\omega}$ and for all $(i,k) \neq (j,k) \in \dom(s)$ we have $s(i,k) \neq s(j,k)$.
		For $(s,E), (t,F) \in \bbE_J(\calF)$ we define $(t,F) \extends (s,E)$ if $F \supseteq E$, $t \supseteq s$ and for all $f \in E$ and $(i, k) \in \dom(t \setminus s)$ with $k \in \dom(f)$ we have $t(i,k) \neq f(k)$.
	\end{definition}
	
	Again $\bbE_J(\calF)$ is $\sigma$-centered, so given $\calF := \calF_{\alpha} \cup \set{p_\beta}{\beta \leq \alpha}$ and $J := m$, by {\sf MA}($\sigma$-centered) we may choose $G$ generic for the dense sets
	\[
		D_k := \set{(s,E)}{(i,k) \in \dom(s) \text{ for all } i \in J},
	\]
	where $k \in \omega$ and
	\[
		D_f := \set{(s,E)}{f \in E},
	\]
	where $f \in \calF$ and
	\[
		E_K := \set{(s,E)}{\exists k > K \text{ with } k \in X \text{ such that } s(a,k) = h_{i_a}(k) \text{ for all } a \in m},
	\]
	where $K \in \omega$.
	The only non-trivial density argument is for $E_K$, so let $K \in \omega$ and $(s,E) \in \bbE_J(\calF)$ with $\dom(s) = J \times D$.
	Let $E = E_0 \cup E_1$, where $E_0 \in \finsubset{\calF_\alpha}$ and $E_1 = \set{p_{\beta}:A_\beta\to\omega}{\beta \in B}$ for some $B \in \finsubset{\alpha + 1}$.
	Remember that by construction of $X_n$ for all $f \in E_0$ the set
	\[
		X \cap E(h_{i_a}, f) \in \calI.
	\]
	Hence, we have that
	\[
		\left(\bigcup_{f \in E_0, a \in m} (X \cap E(h_{i_a}, f)) \cup \bigcup_{\beta \in B} A_\beta\right) \in \calI.
	\]
	But $X \notin \calI$, so we may pick $k \in X$ with $k > K$, $k \notin D$ and
	\begin{enumerate}[$\circ$]
		\item $h_{i_a}(k) \neq f(k)$ for all $a \in m$ and $f \in E_0$,
		\item $k \notin \dom(p_\beta)$ for all $\beta \in B$.
	\end{enumerate}
	But then we may extend $s$ by $s(a,k) := h_{i_a}(k)$ for all $a \in m$ as required.
	Note that $h_{i_a}(k) \neq h_{i_b}(k)$ for $a \neq b$, since they represent different equivalence classes.
	Now, given a generic $G$ for these $<\!\!\frc$-many dense sets, for $a \in m$ we let
	\[
		g_a := \bigcup\set{s_a}{(s, E) \in G},
	\]
	where $s_a$ is the $a$-th component of $s$ and we set $\calF_{\alpha + 1} := \calF_{\alpha} \cup \set{g_a}{a \in m}$.
	Then $\calF_{\alpha + 1}$ is still eventually different, for all $\beta \leq \alpha$ we have $\calF_{\alpha + 1} \cup \simpleset{p_\beta}$ is still eventually different and
	\[
		\bigcap_{i \in S} E(h_i, g_{[i]}) \cap \bigcap_{i \in n \setminus S} E(h_i, f_i)
	\]
	is infinite. Now, Theorem~\ref{THM_ManyI0Productive} follows with a standard bookkeeping argument.\hfill\qedsymbol\\
	
	Finally, note that whereas the starting ideal $\calI_0(\calF)$ of the associated chain of ideals may be any proper ideal containing $\Fin$, the complete chain cannot be any completely arbitrary chain: At least if $\calI_0(\calF) = \Fin$, then by the discussion above $\calF$ is Van Douwen and the entire associated chain of ideals is $\Fin$.
	
	\begin{question}
		What structure and restrictions does the associated chain of ideals of a maximal eventually different families have?
	\end{question}
	
	\section{Borel productive families}
	
	Remember that Van Douwen families give canonical examples of finitely productive families.
	However, Raghavan proved that Van Douwen families can never be analytic, whereas Horowitz and Shelah constructed a Borel maximal eventually different family $\calE$ in \cite{HorowitzShelah_2024}.
	Hence, we may naturally ask what the possible complexity of productive families is.
	We will first show that $\calE$ is not square.
	However, we will also show that their construction may be generalized to obtain a Borel $n$-productive and not $(n+1)$-productive family $\calE_n$ for every $n \geq 1$.
	Further, we will also construct a Borel finitely productive family $\calE_{<\omega}$.
	Hence, we may separate all levels of productivity by Borel witnesses.
	We begin with a short reminder of the construction by Horowitz and Shelah.
	We highly recommend the exposition by Schrittesser in \cite{Schrittesser_2016} and will also partially follow their notation for the new proofs of this chapter.
	
	First of all, instead of constructing a family of functions from $\omega$ to $\omega$ in the context of the Borel maximal eventually different family it will be more natural to instead consider functions from $\omega$ to $\finseq{\omega}$.
	In order to avoid double exponents we write $\Seq := \finseq{\omega}$ for the set of all finite sequences of natural numbers,
	so the med families we construct will be subsets of $\infseq{\Seq} = \infseq{(\finseq{\omega})}$.
	Fix a computable bijection $\Phi:\infseq{\Seq} \to \bairespace$ with computable inverse.
	Now, there is an obvious naive eventually different family indexed by $\bairespace$:
	\[
		\set{\text{e}(f)}{f \in \bairespace},
	\]
	where $\text{e}(f)$ is the restriction function defined by $\text{e}(f)(n) := f \restr n$.
	Clearly, this family is eventually different but will not be maximal.
	Any given $h: \omega \to \Seq$ need not be infinitely often equal to some $\text{e}(f)$.
	In this case we want to modify the function indexed by $\Phi(h)$, that is $\text{e}(\Phi(h))$, to be infinitely often equal to $h$ by overwriting some part of it by a copy of $h$, and thus witnessing maximality.
	Hence, for a given $h \in \infseq{\Seq}$ we define an infinite subset
	\[
		B(h) := \set{2 \langle h \restr n \rangle + 1}{n \in \omega},
	\]
	where $\langle \cdot \rangle$ is a computable coding of finite sequences of members in $\Seq$ by natural numbers.
	Note that $\set{B(h)}{h \in \infseq{\Seq}}$ is an almost disjoint family.
	Now, we would like to replace $\text{e}(\Phi(h))$ on $B(h)$ by a copy of $h$.
	Doing this for all $h \in \infseq{\Seq}$ would yield a maximal family, but unfortunately this is not eventually different any more. 
	So the key idea is to do the copying only if it is safe to do so on a smaller infinite subset $C(h) \subseteq B(h)$.
	Then $\set{\dot{\text{e}}(h, C(h))}{h \in \infseq{\Seq}}$ will be maximal eventually different, where
	\[
		\dot{\text{e}}(h, C)(n) =
		\begin{cases}
			h(n) &\text{if } n \in C,\\
			\text{e}(\Phi(h))(n) & \text{otherwise},
		\end{cases}
	\]
	and $C(h)$ is either an infinite subset of $B(h)$ or $C(h) = \emptyset$ if we do not copy.
	We never modify anything on even coordinates, so the even coordinates can still recover $h$, which is important for definability.
	Copying $h$ on all of $B(h)$ would not in general be safe, because $h$ may have infinite agreement with some code $\text{e}(f)$ on $B(h)$.
	Instead, we look for an infinite subset $C(h) \subseteq B(h)$ on which $h$ has only finite, in fact at most one, agreement with every code $\text{e}(f)$.
	Such a set is safe for copying.
	More precisely, we can arithmetically detect the safety of copying using the following relation $\prec_h$:
	
	\begin{definition}
		Let $h \in \infseq{\Seq}$.
		Define a relation $\prec_h$ on $B(h)$ by $m \prec_h m'$ iff
		\[
			m < m', \quad \length(h(m)) = m, \quad \length(h(m')) = m' \quad \text{ and } \quad h(m) \subsetneq h(m').
		\]
	\end{definition}

	\noindent We define the set of all $\prec_h$-maximal (or terminal) points to be
	\[
		T(h) := \set{m \in B(h)}{\forall m' \in B(h) \setminus m \text{ we have } m \not\prec_h m'}.
	\]
	If $T(h)$ is infinite, we set $C(h) := T(h)$ and call $h$ copyable.
	Then, by definition of $\prec_h$, on $T(h)$ the function $h$ cannot agree with any $\text{e}(f)$ on more than one point, so we are safe to copy $h$ onto $\text{e}(\Phi(h))$ via $\dot{\text{e}}(h, C(h))$.
	
	Otherwise, there is an infinite strictly increasing chain
	\[
		m_0 \prec_h m_1 \prec_h m_2 \prec_h \dots,
	\]
	of elements in $B(h)$:
	Indeed, because $T(h)$ is finite, we let $m_0$ denote the minimal element of $B(h)$ such that $h(m_0)$ is longer than all $h(m)$, where $m \in T(h)$.
	Clearly, such an $m_0$ exists, for if otherwise $T(h)$ would be infinite.
	Then, we can define $m_{i+1}$ to be the minimal element of $B(h)$ with $m_{i} \prec_h m_{i+1}$.
	This is possible since $m_i \notin T(h)$.
	
	But then, we also obtain an infinite strictly increasing chain
	\[
		h(m_0) \subsetneq h(m_1) \subsetneq h(m_2) \subsetneq \dots,
	\]
	with union $f \in \bairespace$.
	Now, we have
	\[
		h(m_i) = f \restr m_i = \text{e}(f)(m_i),
	\]
	so $h$ is infinitely often equal to $\text{e}(f)$ on $B(h)$.
	We say that $h$ is coherent with $f$ and set $C(h) := \emptyset$.
	Thus, the construction uses the arithmetical sufficient condition $T(h) \in \infsubset{\omega}$ for safe copying.
	Note that copyability and coherence with some $f$ are not mutually exclusive; sometimes we copy even though $h$ is already caught by a code.
	
	Finally, one can now show that the family $\calE := \set{\dot{\text{e}}(h, C(h))}{h \in \infseq{\Seq}}$ is med, and because $h$ can be recovered from $\dot{\text{e}}(h, C(h))$, copyability is arithmetical and $C(h)$ can be found arithmetically in $h$, in fact the final family $\calE$ will be arithmetical (see \cite{Schrittesser_2016} for the full details).
	With all definitions in place, we can finally consider productiveness of $\calE$.
	In fact, our choice of codomain as $\Seq$ makes the proof very natural:
	
	\begin{lemma} \label{LEM_HSNotSquare}
		The Horowitz-Shelah med family $\calE$ is not square.
	\end{lemma}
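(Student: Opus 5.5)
The plan is to refute maximality of $\calE^2$ directly via Remark~\ref{REM_ProductiveEquivalence}, which works verbatim for the codomain $\Seq$. We want two functions $x_1, x_2 \in \infseq{\Seq}$ such that $E(x_1, e_1) \cap E(x_2, e_2)$ is finite for all $e_1, e_2 \in \calE$. The idea is to choose $x_1, x_2$ so that they are caught only by the copied parts of members of $\calE$, and never by the uncopied codes $\text{e}(f)$. Then we use almost disjointness of the sets $B(h)$ to force $e_1 = e_2$, and finally choose $x_1(n) \neq x_2(n)$ everywhere.

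\textbf{Choice of $x_1, x_2$.} For $n \geq 1$ let $x_1(n)$ be the sequence of length $n$ that is $0$ on its first $n-1$ entries and has last entry $n+1$. Let $x_2(n)$ be the analogous sequence with $1$'s instead of $0$'s. Put $x_1(0) = x_2(0) = \emptyset$.

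\textbf{Step 1: no code catches $x_1$ or $x_2$ twice.} Let $f \in \bairespace$ and suppose $\text{e}(f)(n) = f \restr n = x_1(n)$ for some $n \geq 1$. Then $f(n-1) = n+1$. Hence this can happen for at most one $n \geq 1$, since a different $n' > n$ would require $f(n-1) = 0$. The same holds for $x_2$. So for $e_j = \dot{\text{e}}(h_j, C(h_j))$, all but finitely many points of $E(x_j, e_j)$ lie in $C(h_j) \subseteq B(h_j)$.

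\textbf{Step 2: both agreements must come from the same copy.} If $E(x_1, e_1) \cap E(x_2, e_2)$ is infinite, then by Step 1 the set $B(h_1) \cap B(h_2)$ is infinite. Since $\set{B(h)}{h \in \infseq{\Seq}}$ is almost disjoint, and $h$ is determined by $B(h)$ because $2\langle h \restr n\rangle + 1$ codes $h \restr n$, we get $h_1 = h_2 =: h$, and thus $e_1 = e_2$.

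\textbf{Step 3: conclusion.} On $C(h)$ we have $e_1(n) = h(n)$, which cannot equal both $x_1(n)$ and $x_2(n)$, since these differ for $n \geq 1$. So the intersection is finite, and $x_1 \times x_2$ witnesses that $\calE^2$ is not maximal.

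The main obstacle is Step 1, namely designing $x_j$ so that no $\text{e}(f)$ agrees with it infinitely often. The trick is to make the sequences $x_j(n)$ pairwise non-extending, via the varying last entry $n+1$, so that no single branch $f$ can pass through infinitely many of them. The other steps only use almost disjointness of the $B(h)$ and the fact that the copy writes a single value $h(n)$ at each point.
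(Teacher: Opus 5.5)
Your argument is correct and is essentially the paper's proof: the paper instead takes $x_0(n), x_1(n) \notin D_n := {}^n\omega$ (sequences of the wrong length), so that no code $\text{e}(f)$ ever agrees with them and one gets the exact inclusion $E(x_i, \dot{\text{e}}(h_i, C(h_i))) \subseteq C(h_i)$. Your pairwise incomparable values only give an almost-inclusion, and the paper then concludes exactly as you do, via almost disjointness of the copy supports and pointwise distinctness of the two witnesses. One harmless slip: with your definitions, $x_1(1)$ and $x_2(1)$ are both the one-term sequence with entry $2$, so they differ only for $n \geq 2$; but $E(x_1, x_2)$ is still finite, so your Step 3 goes through unchanged.
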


	\begin{proof}
		Define for each $n \in \omega$ the pairwise disjoint sets $D_n := {{^n}\omega} \subseteq \Seq$.
		Then, $D_n$ is precisely the range of the map $f \mapsto \text{e}(f)(n)$.
		Choose two functions $x_0, x_1 \in \infseq{\Seq}$ such that for all $n \in \omega$ we have
		\[
			x_0(n) \neq x_1(n) \quad \text{and} \quad x_0(n), x_1(n) \notin D_n.
		\]
		We claim that $x_0, x_1$ witness that $\calE$ is not square, so let $h_0, h_1 \in \infseq{\Seq}$ and assume that the intersection $E(x_0, \dot{\text{e}}(h_0, C(h_0))) \cap E(x_1, \dot{\text{e}}(h_1, C(h_1)))$ is infinite.
		First, note that for $i \in 2$ we have 
		\[
			E(x_i, \dot{\text{e}}(h_i, C(h_i))) \subseteq C(h_i).
		\]
		Indeed, for $n \in \omega \setminus C(h_i)$ we have
		\[
			\dot{\text{e}}(h_i, C(h_i))(n) = e(\Phi(h_i))(n) \in D_n,
		\]
		which cannot equal $x_i(n) \notin D_n$.
		But the $C(h_i)$'s are almost disjoint, so $h_0 = h_1$.
		But this would imply that $x_0(n) = x_1(n)$ for infinitely many $n$, contradicting the choice of $x_0$ and $x_1$.
	\end{proof}

	Fundamentally, squareness breaks down, since the construction of $\calE$ only diagonalizes against any possible violation of maximality given by one element of $\infseq{\Seq}$.
	In other words, if we set $\calR := \infseq{\Seq}$, then $\calR$ is the set of all requirements we need to diagonalize against to obtain a maximal family indexed by the Polish space $\calR$.
	Now, in order to obtain a square med family $\calE_2$ we need to diagonalize against requirements given by pairs $(h_0, h_1) \in \infseq{\Seq} \times \infseq{\Seq}$.
	So, naively one might try to set $\calR_2 = \infseq{\Seq} \times \infseq{\Seq}$ and construct a square family indexed by the Polish space $\calR_2$.
	However, in order for $(h_0, h_1) \in \infseq{\Seq} \times \infseq{\Seq}$ to not be a violation the final family needs to have two members $f_0, f_1 \in \calE_2$ such that
	\[
		E(h_0, f_0) \cap E(h_1, f_1)
	\]
	is infinite, so we really need to add two members per requirement to diagonalize against every possible violation.
	Thus, the correct indexing set for our family $\calE_2$ is $\calR_2 := \infseq{\Seq} \times \infseq{\Seq} \times 2$.
	However, crucially the almost disjoint supports for copying should be indexed by the requirement $(h_0, h_1) \in \infseq{\Seq} \times \infseq{\Seq}$, so the functions associated to $(h_0, h_1, 0)$ and $(h_0, h_1, 1)$ share the same support, where copying may happen.
	We now describe and prove the full construction of the Borel square med family $\calE_2$, in order to present all new ideas in the simplest setting. We then explain how to further generalize the construction to obtain Borel $n$-productive and even Borel finitely productive med families.
	The ideas are similar to the original proof: Maximality is again obtained by copying functions on suitably chosen almost disjoint supports, however the considerations when it is safe to do such copying and on which set are considerably more involved, since for a given requirement $(h_0, h_1)$ there are now four different cases, according to whether $h_0, h_1$ are copyable or not.
	
	First, we fix a computable bijection $\Phi_2:\calR_2 \to \bairespace$ with computable inverse $\Psi_2$.
	The initial unmodified eventually different family is given by
	\[
		\set{\text{e}(\Phi_2(h_0,h_1,i))}{(h_0,h_1,i) \in \calR_2},
	\]
	where $\text{e}(f)(n) := f \restr n$.
	As before, we will replace some parts of $\text{e}(\Phi_2(h_0,h_1,i))$ with copies of parts of $h_0$ and $h_1$.
	Denote with $h_0 \ast h_1$ the standard coding of pairs given by
	\begin{align*}
		(h_0 \ast h_1)(2n) &:= h_0(n),\\
		(h_0 \ast h_1)(2n + 1) &:= h_1(n).
	\end{align*}
	Again, we define an almost disjoint family of copy supports, but now indexed by $\infseq{\Seq} \times \infseq{\Seq}$:
	\[
		B(h_0,h_1) := B(h_0 \ast h_1) = \set{2\langle (h_0 \ast h_1) \restr n \rangle + 1}{n \in \omega}.
	\]
	We will again arithmetically find some $C(h_0,h_1) \subseteq B(h_0,h_1)$, so that for $i \in 2$ we have that $\dot{\text{e}}(h_0,h_1,i)$ will mostly be $\text{e}(\Phi_2(h_0,h_1,i))$, except possibly some copying of $h_0$ or $h_1$ on $C(h_0,h_1)$.
	Again, we never modify even coordinates, so we will have that $\Phi_2(h_0,h_1,i)$ and thus $h_0,h_1$ and $i$ can be recovered from $\dot{\text{e}}(h_0,h_1,i)$, which is important for definability.
	As before, for $h \in \infseq{\Seq}$ consider the relation $\prec_h$ on $B(h_0,h_1)$ defined by $m \prec_h m'$ iff
	\[
		m < m', \quad \length(h(m)) = m, \quad \length(h(m')) = m' \quad \text{ and } \quad h(m) \subsetneq h(m').
	\]
	Furthermore, for an infinite subset $X \in \infsubset{B(h_0,h_1)}$ define the set of $\prec_h$-maximal points restricted to $X$ by
	\[
		T_X(h) := \set{m \in X}{\forall m' \in X \setminus m \text{ we have } m \not\prec_h m'}.
	\]
	As before, either $T_X(h)$ is infinite, or if $T_X(h)$ is finite there is a strictly increasing chain
	\[
		m_0 \prec_h m_1 \prec_h m_2 \prec_h \dots
	\]
	of elements of $X$.
	Note that such a chain can be chosen arithmetically if we always choose minimal $m_i \in X$.
	As described before, in this case we get a strictly increasing chain
	\[
		h(m_0) \subsetneq h(m_1) \subsetneq h(m_2) \subsetneq \dots
	\]
	with union $f \in \bairespace$.
	Then, again
	\[
		h(m_i) = f \restr m_i = \text{e}(f)(m_i),
	\]
	so $h$ is infinitely often equal to some $\text{e}(f)$ on $X$.
	We need this restricted version, since we will thin out sequentially to smaller and smaller infinite subsets of $B(h_0, h_1)$.
		
	Now, fix some requirement $(h_0,h_1) \in \infseq{\Seq} \times \infseq{\Seq}$ and start with $X_0 := B(h_0,h_1)$.
	If we have that $T_{X_0}(h_0)$ is infinite, set $X_1 := T_{X_0}(h_0)$ and call $h_0$ copyable for this requirement.
	Otherwise, let $X_1$ be the canonical strictly increasing chain in $X_0$, let $f_{h_0}$ be the corresponding union of $h_0(m_i)$ for $m_i \in X_1$ as described above and call $h_0$ coherent with $f_{h_0}$ for this requirement.
	
	Now, repeat with $X_1$:
	If we have that $T_{X_1}(h_1)$ is infinite, set $X_2 := T_{X_1}(h_1)$ and call $h_1$ copyable for this requirement.
	Otherwise, let $X_2$ be the canonical strictly increasing chain in $X_1$, let $f_{h_1}$ be the corresponding union of $h_1(m_i)$ for $m_i \in X_2$ as described above and call $h_1$ coherent with $f_{h_1}$ for this requirement.
	Note that the notions of copyability and coherence are relative to the current support.
	Thus, they are not symmetric in the two coordinates: $h_0$ is tested on $X_0$, whereas $h_1$ is tested only after thinning to $X_1$.
	
	Finally, if both $h_0$ and $h_1$ are copyable for this requirement we must ensure that the two copied functions remain eventually different.
	If $E(h_0,h_1) \cap X_2$ is infinite, then we set
	\[
		C(h_0,h_1) := E(h_0,h_1) \cap X_2.
	\]
	In this case a single copied function suffices to witness both coordinates.
	Otherwise, $X_2 \setminus E(h_0,h_1)$ is infinite and we set
	\[
		C(h_0,h_1) := X_2 \setminus E(h_0,h_1).
	\]
	Then $h_0$ and $h_1$ disagree on every point of the common support $C(h_0,h_1)$, so we can safely copy both of them on the common support $C(h_0, h_1)$.
	In all other cases we simply set
	\[
		C(h_0, h_1) := X_2.
	\]
	
	Finally, we describe how to construct $\dot{\text{e}}(h_0,h_1,0)$ and $\dot{\text{e}}(h_0,h_1,1)$:
	If $h_0$ is coherent with $f_{h_0}$ and $h_1$ is coherent with $f_{h_1}$ we do not copy anything and for $i \in 2$ just set
	\[
		\dot{\text{e}}(h_0,h_1,i) := \text{e}(\Phi_2(h_0,h_1,i)).
	\]
	If $h_0$ is copyable and $h_1$ is coherent with $f_{h_1}$, then we need to copy $h_0$, but we need to be careful to not overwrite $f_{h_1}$ accidentally in case $f_{h_1}$ happens to be either $\Phi_2(h_0,h_1,0)$ or $\Phi_2(h_0,h_1,1)$.
	However, as we are adding two functions we let $i \in 2$ be minimal such that $\Phi_2(h_0,h_1,i) \neq f_{h_1}$.
	Now, copy $h_0$ onto the $i$-th function and leave the other function unmodified:
	\begin{align*}
		\dot{\text{e}}(h_0,h_1,i)(n) &:=
		\begin{cases}
			h_0(n) & \text{if } n \in C(h_0,h_1),\\
			\text{e}(\Phi_2(h_0,h_1,i))(n) & \text{otherwise}.
		\end{cases}\\
		\dot{\text{e}}(h_0,h_1,1-i) &:= \text{e}(\Phi_2(h_0,h_1,1-i))
	\end{align*}
	If $h_0$ is coherent with $f_{h_0}$ and $h_1$ is copyable this is completely analogous to the previous case, so assume that both $h_0$ and $h_1$ are copyable.
	Then, we have two subcases: If $C(h_0,h_1) \subseteq E(h_0,h_1)$, then we just copy $h_0 \restr C(h_0,h_1) = h_1 \restr C(h_0,h_1)$ onto the first function and leave the other function unmodified:
	\begin{align*}
		\dot{\text{e}}(h_0,h_1,0)(n) &:=
		\begin{cases}
			h_0(n) (= h_1(n)) & \text{if } n \in C(h_0,h_1),\\
			\text{e}(\Phi_2(h_0,h_1,0))(n) & \text{otherwise}.
		\end{cases}\\
		\dot{\text{e}}(h_0,h_1,1) &:= \text{e}(\Phi_2(h_0,h_1,1))
	\end{align*}
	Otherwise, $h_0(n) \neq h_1(n)$ for all $n \in C(h_0,h_1)$ and we copy $h_0$ onto the first function and $h_1$ onto the second function:
	\begin{align*}
		\dot{\text{e}}(h_0,h_1,0)(n) &:=
		\begin{cases}
			h_0(n) & \text{if } n \in C(h_0,h_1),\\
			\text{e}(\Phi_2(h_0,h_1,0))(n) & \text{otherwise}.
		\end{cases}\\
		\dot{\text{e}}(h_0,h_1,1)(n) &:=
		\begin{cases}
			h_1(n) & \text{if } n \in C(h_0,h_1),\\
			\text{e}(\Phi_2(h_0,h_1,1))(n) & \text{otherwise}.
		\end{cases}
	\end{align*}
	This finishes the construction of $\calE_2 := \set{\dot{\text{e}}(h_0,h_1,i)}{(h_0,h_1,i) \in \calR_2}$.
	It remains to show that $\calE_2$ is indeed a Borel square med family.
	
	\begin{theorem}
		$\calE_2$ is a Borel square med family.
	\end{theorem}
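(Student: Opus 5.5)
I will verify three things in turn: that $\calE_2$ is eventually different, that $\calE_2$ is square (which includes maximality, since squareness implies maximality by restricting to the diagonal $x_0 = x_1$), and that $\calE_2$ is Borel.

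\emph{Borelness.} This follows as in \cite{Schrittesser_2016}. The even coordinates are never modified, so from $g = \dot{\text{e}}(h_0,h_1,i)$ we recover $\Phi_2(h_0,h_1,i)$ and hence, via $\Psi_2$, the triple $(h_0,h_1,i)$. The sets $X_1, X_2, C(h_0,h_1)$, the copyable/coherent case distinction, the functions $f_{h_0}, f_{h_1}$ and the choice of the index $i$ are all arithmetical in $(h_0,h_1)$. Hence $g \in \calE_2$ iff $g = \dot{\text{e}}(\Psi_2(\text{decoded}))$, which is an arithmetical condition.

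\emph{Eventual difference.} Take two distinct members $g = \dot{\text{e}}(h_0,h_1,i)$ and $g' = \dot{\text{e}}(h_0',h_1',i')$.
\begin{enumerate}[$\circ$]
	\item Both unmodified parts are codes $\text{e}(\Phi_2(\cdot))$ of distinct reals, so they agree only finitely often.
	\item If the requirements $(h_0,h_1)$ and $(h_0',h_1')$ differ, the supports $C \subseteq B(h_0 \ast h_1)$ and $C' \subseteq B(h_0' \ast h_1')$ are almost disjoint, so two copied regions overlap only finitely.
	\item If the requirements coincide, the two copied functions live on the same $C$. Either only one function is copied, or $h_0$ and $h_1$ disagree on every point of $C$, in which case the two copies never agree there.
	\item The remaining case is a copied $h_j$ on $C$ against a code $\text{e}(f)$. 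By construction $C \subseteq T_{X}(h_j)$ for the relevant $X$ (note $X_2 \subseteq X_1$ and $T_{X_2}(h_1) \subseteq X_2$). If $h_j(m) = f \restr m$ and $h_j(m') = f \restr m'$ with $m < m'$ both in $C$, then $m \prec_{h_j} m'$ with both points in $X$, contradicting terminality. So there is at most one agreement.
\end{enumerate}
This is where a subtlety arises: $h_0$ is terminal in $X_0$ and $h_1$ in $X_1$, and $C \subseteq X_2$. Restricting to a subset keeps terminal points terminal in the relevant direction, so I expect this to go through.

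\emph{Squareness.} By Remark~\ref{REM_ProductiveEquivalence}, given $h_0,h_1$ I will find $g_0,g_1 \in \calE_2$ with $E(h_0,g_0) \cap E(h_1,g_1)$ infinite. I split into the four cases of the construction.
\begin{enumerate}[$\circ$]
	\item \emph{Both coherent.} $h_0$ agrees with $\text{e}(f_{h_0})$ on the chain $X_1$. Then $h_1$ agrees with $\text{e}(f_{h_1})$ on the chain $X_2 \subseteq X_1$, where $h_0$ still agrees with $\text{e}(f_{h_0})$. The codes $\text{e}(f_{h_j})$ are not members themselves, so I take $g_j$ to be the member indexed by $\Psi_2(f_{h_j})$. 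This member equals $\text{e}(f_{h_j})$ off its own support, which is almost disjoint from $X_2$. So the agreements persist on a cofinite part of $X_2$.
	\item \emph{One copyable, one coherent}, say $h_0$ copyable. Copying $h_0$ on $C = X_2$ onto the $i$-th function gives $g_0 = \dot{\text{e}}(h_0,h_1,i)$. For $g_1$ I take the member indexed by $f_{h_1}$, where the choice of $i$ ensures that this member is not the one we overwrote on $C$. The case with $h_1$ copyable is symmetric.
	\item \emph{Both copyable.} Use either the single copy, with $g_0 = g_1$ on $C \subseteq E(h_0,h_1)$, or the two copies on $C$.
\end{enumerate}

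The main obstacle is the coherent cases. One must check that the member of $\calE_2$ indexed by $f_{h_j}$ was not itself modified on the relevant infinite set, using almost disjointness of supports, and also handle the possibility that this member is one of the two functions belonging to the current requirement. The choice of the minimal $i$ with $\Phi_2(h_0,h_1,i) \neq f_{h_1}$ is exactly designed for the latter, so I would verify that case first.
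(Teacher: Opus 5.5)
Your proposal is correct and takes essentially the same route as the paper. It uses the same eventual-difference case analysis: codes off the supports, almost disjoint supports for distinct requirements, disjoint double copies, and at most one agreement of a copy with a code via terminality in $T_{X_0}(h_0)$ or $T_{X_1}(h_1)$; it also uses the same four-case squareness argument with the members indexed by $\Psi_2(f_{h_j})$ and the minimal choice of $i$, and the same arithmetical decoding from the even coordinates, while your remark that maximality of $\calE_2$ follows from squareness via the diagonal $x_0 = x_1$ makes explicit a point the paper leaves implicit.
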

	
	\begin{proof}
		For $(h_0,h_1,i) \in \calR_2$ we let $C(h_0,h_1,i) := C(h_0,h_1)$ if $\dot{\text{e}}(h_0,h_1,i)$ was modified and set $C(h_0,h_1,i) := \emptyset$ otherwise.
		So $\dot{\text{e}}(h_0,h_1,i)$ equals $\text{e}(\Phi_2(h_0,h_1,i))$ outside $C(h_0,h_1,i)$ and is a copy of $h_0$ or $h_1$ on $C(h_0,h_1,i)$.
		
		First, we argue that $\calE_2$ is eventually different, so let $(h_0,h_1,i) \neq (g_0,g_1,j) \in \calR_2$.
		Outside $C(h_0,h_1,i) \cup C(g_0,g_1,j)$ we just have $\text{e}(\Phi_2(h_0,h_1,i))$ versus $\text{e}(\Phi_2(g_0,g_1,j))$, which are eventually different as $(h_0,h_1,i) \neq (g_0,g_1,j)$.
		
		Next, on $C(h_0,h_1,i) \setminus C(g_0,g_1,j)$ we have a copy of $h_0$ or $h_1$ against $\text{e}(\Phi_2(g_0,g_1,j))$.
		Then, by definition of copyable we have that both sides may only agree on at most one value.
		The case $C(g_0,g_1,j) \setminus C(h_0,h_1,i)$ is symmetric.
		
		It remains to consider $C(h_0,h_1,i) \cap C(g_0,g_1,j)$.
		This set is finite if $(h_0,h_1) \neq (g_0,g_1)$, so we may further assume that $(h_0,h_1) = (g_0,g_1)$, so $i \neq j$.
		But the only case, where we modified both functions attached to the same requirement $(h_0,h_1)$ is when we additionally arranged that $h_0(n) \neq h_1(n)$ for all $n \in C(h_0,h_1)$ and copied $h_0$ onto the first function and $h_1$ onto the second function.
		Hence, $\calE_2$ is eventually different.
		
		For squareness, let $h_0,h_1 \in \infseq{\Seq}$.
		If $h_0$ is coherent with $f_{h_0}$ and if $h_1$ is coherent with $f_{h_1}$, then by thinning $X_0$ to $X_1$ we ensured that
		\[
			\text{e}(f_{h_0}) \restr C(h_0,h_1) = h_0 \restr C(h_0,h_1).
		\]
		Similarly, by thinning $X_1$ to $X_2$ we ensured that
		\[
			\text{e}(f_{h_1}) \restr C(h_0,h_1) = h_1 \restr C(h_0,h_1).
		\]
		Now, let $\Psi_2(f_{h_0}) = (g_0, g_1, i)$.
		If $(h_0, h_1) = (g_0, g_1)$, then we did not modify $\text{e}(f_{h_0})$ so that
		\[
			E(h_0, \dot{\text{e}}(g_0,g_1,i)) \supseteq C(h_0,h_1).
		\]
		Otherwise, $(h_0, h_1) \neq (g_0, g_1)$.
		But then $f_{h_0}$ could only have been modified on $C(g_0, g_1)$ which is almost disjoint from $C(h_0, h_1)$.
		Hence, also in this case
		\[
			E(h_0, \dot{\text{e}}(g_0,g_1,i)) \supseteq^* C(h_0,h_1).
		\]
		By the same argument for $\Psi_2(f_{h_1}) = (g_0',g_1',i')$ we get
		\[
			E(h_1, \dot{\text{e}}(g_0',g_1',i')) \supseteq^* C(h_0,h_1).
		\]
		Together, this shows that 
		\[
			E(h_0, \dot{\text{e}}(g_0,g_1,i)) \cap E(h_1, \dot{\text{e}}(g_0',g_1',i')) \supseteq^* C(h_0,h_1)
		\]
		is infinite.
		Next, assume that $h_0$ is copyable and $h_1$ is coherent with $f_{h_1}$.
		We still have 
		\[
			\text{e}(f_{h_1}) \restr C(h_0,h_1) = h_1 \restr C(h_0,h_1).
		\]
		Further, we modified $\dot{\text{e}}(h_0,h_1, i)$ for some $i \in 2$ with $\Phi_2(h_0,h_1,i) \neq f_{h_1}$, so that
		\[
			\dot{\text{e}}(h_0,h_1,i) \restr C(h_0,h_1) = h_0 \restr C(h_0,h_1).
		\]
		Now, for $\Psi_2(f_{h_1}) = (g_0,g_1,j)$ we either have $(g_0,g_1) = (h_0, h_1)$ and $i \neq j$, or $(h_0, h_1) \neq (g_0, g_1)$, so by a similar argument as before
		\[
			E(h_1, \dot{\text{e}}(g_0,g_1,j)) \supseteq^* C(h_0,h_1).
		\]
		Together, this shows that
		\[
			E(h_0, \dot{\text{e}}(h_0,h_1,i)) \cap E(h_1, \dot{\text{e}}(g_0,g_1,j)) \supseteq^* C(h_0,h_1)
		\]
		is infinite.
		The case where $h_0$ is coherent and $h_1$ is copyable is symmetric.
		Finally, assume that both $h_0$ and $h_1$ are copyable.
		Then we have two subcases, so let us first assume that $C(h_0,h_1) \subseteq E(h_0,h_1)$.
		Then we copied $h_0 \restr C(h_0,h_1) = h_1 \restr C(h_0,h_1)$ onto $\text{e}(\Phi_2(h_0,h_1,0))$, so
		\[
			E(h_0, \dot{\text{e}}(h_0,h_1,0)) \cap E(h_1, \dot{\text{e}}(h_0,h_1,0)) \supseteq C(h_0, h_1)
		\] 
		is infinite.
		Otherwise, $h_0$ and $h_1$ are distinct on $C(h_0, h_1)$ and we copied $h_0 \restr C(h_0,h_1)$ onto $\text{e}(\Phi_2(h_0,h_1,0))$ and $h_1 \restr C(h_0,h_1)$ onto $\text{e}(\Phi_2(h_0,h_1,1))$, so
		\[
			E(h_0, \dot{\text{e}}(h_0,h_1,0)) \cap E(h_1, \dot{\text{e}}(h_0,h_1,1)) \supseteq C(h_0, h_1)
		\]
		is infinite.
		It remains to argue that $\calE_2$ is Borel, we even argue that it is arithmetic.
		The exact complexity does not really matter as we construct a closed witness from $\calE_2$ in the next section.
		Since we never modify even coordinates, for $h_0, h_1 \in \infseq{\Seq}$, $i \in 2$ and $n \in \omega$ we have that
		\[
			\dot{\text{e}}(h_0,h_1,i)(2n) = \text{e}(\Phi_2(h_0,h_1,i))(2n) = \Phi_2(h_0,h_1,i) \restr 2n.
		\]
		Therefore, from the even coordinates of $\dot{e}(h_0,h_1,i)$ one can arithmetically recover $\Phi_2(h_0,h_1,i)$ and thus $h_0, h_1$ and $i$.
		Furthermore, we have the following:
		\begin{enumerate}[$\circ$]
			\item $B(h_0,h_1)$ is arithmetical in $h_0$ and $h_1$,
			\item The relation $m \prec_h m'$ is arithmetical in $h, m, m'$,
			\item The set $T_X(h)$ is arithmetical in $h$ and $X$,
			\item The copy or coherence case is decided by the property \textquoteleft$T_X(h)$ is infinite\textquoteright, which is arithmetical in $T_X(h)$,
			\item In the copyable case, the next support is $T_X(h)$, which is arithmetical in $h$ and $X$,
			\item In the non-copyable case, $T_X(h)$ is finite, so the canonical chain, and thus the next support, is arithmetical in $h$ and $X$ by least choices,
			\item The branch $f_h$ given by the union along the canonical chain is arithmetical in $h$ and the current support $X$,
			\item Therefore, the thinning $B(h_0, h_1) = X_0 \mapsto X_1$, the decision whether $h_0$ is copyable or coherent and the possible canonical branch $f_{h_0}$ are all arithmetical in $h_0$ and $h_1$,
			\item By the same argument, the thinning $X_1 \mapsto X_2$, the decision whether $h_1$ is copyable or coherent and the possible canonical branch $f_{h_1}$ are all arithmetical in $h_0$ and $h_1$,
			\item In the double copyable case, the decision property \textquoteleft$E(h_0,h_1) \cap X_2$ is infinite\textquoteright\ is arithmetical in $h_0, h_1$ and $X_2$ and thus the final choice of $C(h_0,h_1)$ is arithmetical in $h_0$ and $h_1$,
			\item The decision which of the two members attached to $(h_0, h_1)$ are modified is arithmetical in $h_0, h_1$.
			Note that in the mixed cases this only requires checking for $i \in 2$ if
			\[
				\Phi_2(h_0,h_1,i) \neq f_{h_j},
			\]
			where $f_{h_j}$ is the coherent branch.
			Picking the smallest such $i \in 2$ is arithmetical in $h_0$ and $h_1$,
			\item Hence, the definition of $\dot{\text{e}}(h_0,h_1,i)$ is arithmetical in $h_0, h_1$ and $i$.
		\end{enumerate}
		But this shows that membership $g \in \calE_2$ is arithmetic:
		Given $g$, check if $\length(g(2n)) = 2n$, and if so recover the unique parameters $(h_0,h_1,i)$ from the even coordinates and then check arithmetically, if $g = \dot{\text{e}}(h_0,h_1,i)$.
	\end{proof}
	
	\noindent By a similar argument as before we also have that $\calE_2$ is not cubic.
	
	\begin{lemma}\label{LEM_E2NotCubic}
		$\calE_2$ is not cubic.
	\end{lemma}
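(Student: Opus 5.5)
We imitate the proof of Lemma~\ref{LEM_HSNotSquare}, using Remark~\ref{REM_ProductiveEquivalence} with three functions. Let $D_n := {{^n}\omega} \subseteq \Seq$ as before. We choose $x_0, x_1, x_2 \in \infseq{\Seq}$ such that for every $n \in \omega$ the values $x_0(n), x_1(n), x_2(n)$ are pairwise distinct and none of them lies in $D_n$. Such functions exist because $\Seq \setminus D_n$ is infinite.

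Now suppose $(h^j_0,h^j_1,i_j) \in \calR_2$ for $j \in 3$ are such that
\[
\bigcap_{j \in 3} E\bigl(x_j, \dot{\text{e}}(h^j_0,h^j_1,i_j)\bigr)
\]
is infinite; we derive a contradiction.

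First, exactly as in Lemma~\ref{LEM_HSNotSquare}, $E(x_j, \dot{\text{e}}(h^j_0,h^j_1,i_j)) \subseteq C(h^j_0,h^j_1,i_j)$. Outside the copy support, $\dot{\text{e}}(h^j_0,h^j_1,i_j)(n) = \text{e}(\Phi_2(h^j_0,h^j_1,i_j))(n) \in D_n$, and $x_j(n) \notin D_n$. Hence each of the three copy supports is nonempty, so each is infinite, and the three supports have infinite intersection.

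The sets $C(h_0,h_1)$ for distinct requirements are subsets of the almost disjoint sets $B(h_0 \ast h_1)$. Therefore all three requirements coincide: $(h^j_0,h^j_1) = (h_0,h_1)$ for every $j \in 3$. Only the two functions indexed by $i \in 2$ are attached to this requirement, so by pigeonhole two of the three indices agree, say $i_j = i_{j'}$ with $j \neq j'$. Then for infinitely many $n$ we have
\[
x_j(n) = \dot{\text{e}}(h_0,h_1,i_j)(n) = x_{j'}(n),
\]
which contradicts the pairwise distinctness of the values $x_0(n), x_1(n), x_2(n)$. Thus by Remark~\ref{REM_ProductiveEquivalence} the family $\calE_2^3$ is not maximal, so $\calE_2$ is not cubic.

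The only step requiring care is the almost disjointness of the supports across different requirements. This rests on the fact that $\set{B(h_0 \ast h_1)}{(h_0,h_1) \in \infseq{\Seq} \times \infseq{\Seq}}$ is almost disjoint, which holds because the map $(h_0,h_1) \mapsto h_0 \ast h_1$ is injective. Everything else follows the square case verbatim.
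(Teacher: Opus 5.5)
Your proposal is correct and follows the paper's proof essentially verbatim. Both choose $x_0,x_1,x_2$ with pairwise distinct values avoiding $D_n$, confine each equality set to its copy support, use almost disjointness of the supports to force a common requirement $(h_0,h_1)$, and apply pigeonhole to the index $i \in 2$ to reach a contradiction (your explicit justification via injectivity of $(h_0,h_1) \mapsto h_0 \ast h_1$ is a harmless detail the paper leaves implicit).
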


	\begin{proof}
		As before choose three functions $x_0, x_1, x_2 \in \infseq{\Seq}$ such that $x_0(n), x_1(n), x_2(n)$ are pairwise distinct and not from $D_n$.
		Assume there were $(h_0^k,h_1^k,i^k)$ for $k \in 3$ such that
		\[
			\bigcap_{k \in 3} E(x_k, \dot{\text{e}}(h_0^k,h_1^k,i^k))
		\]
		is infinite.
		Because $x_k(n) \notin D_n$ we get that $E(x_k, \dot{\text{e}}(h_0^k,h_1^k,i^k)) \subseteq C(h_0^k,h_1^k,i^k)$.
		By almost disjointness of the $C(h_0,h_1)$ in fact all $(h_0^k,h_1^k,i^k)$ have to agree on the first two coordinates.
		Hence, by pigeonhole there are $k \neq l \in 3$ with $(h_0^k,h_1^k,i^k) = (h_0^l,h_1^l,i^l)$.
		But then $x_k$ and $x_l$ agree on $E(x_k, \dot{\text{e}}(h_0^k,h_1^k,i^k)) \cap E(x_l, \dot{\text{e}}(h_0^l,h_1^l,i^l))$, a contradiction.
	\end{proof}

	\noindent Summarizing everything we obtain the following result:
	
	\begin{corollary}
		$\calE_2$ is a Borel square but not cubic med family.
	\end{corollary}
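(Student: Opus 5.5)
This corollary follows by combining the two results just established, so the plan is simply to assemble them. First, the preceding theorem shows that $\calE_2$ is a Borel (indeed arithmetical) med family which is square, i.e.\ $2$-productive. Its proof covered three things: eventual difference, by the case analysis on $C(h_0,h_1,i)$; maximality of $\calE_2^2$, via Remark~\ref{REM_ProductiveEquivalence} and the four copyable/coherent cases; and definability, by recovering $(h_0,h_1,i)$ from the even coordinates.

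Note that maximality of $\calE_2$ itself is implicit in squareness and needs no separate argument. Given any $x\in\infseq{\Seq}$, we apply squareness to the pair $(x,x)$ and obtain $f_0,f_1\in\calE_2$ with $E(x,f_0)\cap E(x,f_1)$ infinite. In particular $E(x,f_0)$ is infinite, so $\calE_2$ is med. Equivalently, one may cite the general fact that $n$-productive implies $k$-productive for $k\le n$, obtained by repeating coordinates.

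Second, Lemma~\ref{LEM_E2NotCubic} shows that $\calE_2^3$ is not maximal, so $\calE_2$ is not cubic. Its witness is three functions $x_0,x_1,x_2$ that are pointwise pairwise distinct with $x_k(n)\notin D_n$. Any triple matching them infinitely often would force the copy supports to coincide, and then pigeonhole yields a contradiction.

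The conjunction of these two results is exactly the corollary. There is no real obstacle here. The only point worth checking is that ``square'' in the corollary includes $\calE_2$ being med, which was handled above.
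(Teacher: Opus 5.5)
Your proposal is correct and matches the paper, which obtains the corollary simply by combining the preceding theorem ($\calE_2$ is a Borel square med family) with Lemma~\ref{LEM_E2NotCubic}. Your extra remark that maximality of $\calE_2$ follows from squareness, by applying it to the pair $(x,x)$, is valid and makes explicit a point the paper leaves implicit.
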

	
	Next, we describe how to further generalize the construction to obtain for every $n \geq 1$ a Borel $n$-productive but not $(n+1)$-productive med family $\calE_n$.
	By the same discussion as before the natural indexing set for $\calE_n$ is $\calR_n := \underset{n\text{-times}}{\infseq{\Seq} \times \dots \times \infseq{\Seq}} \times n = (\infseq{\Seq})^n \times n$.
	
	\begin{theorem}\label{THM_BorelNProductive}
		There is a Borel $n$-productive but not $(n+1)$-productive med family $\calE_n$.
	\end{theorem}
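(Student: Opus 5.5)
We generalize the construction of $\calE_2$. The index set is $\calR_n = (\infseq{\Seq})^n \times n$, and we fix a computable bijection $\Phi_n:\calR_n \to \bairespace$ with computable inverse $\Psi_n$. The copy supports are $B(h_0,\dots,h_{n-1}) := B(h_0 \ast \dots \ast h_{n-1})$, where $\ast$ is the standard $n$-fold interleaving. These supports again form an almost disjoint family indexed by $(\infseq{\Seq})^n$.

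\textbf{Construction.} For a requirement $\vec h = (h_0,\dots,h_{n-1})$ we thin out sequentially, starting from $X_0 := B(\vec h)$. Given $X_k$, if $T_{X_k}(h_k)$ is infinite we put $X_{k+1} := T_{X_k}(h_k)$ and call $h_k$ copyable. Otherwise $X_{k+1}$ is the canonical least $\prec_{h_k}$-chain in $X_k$, with branch $f_{h_k}$, and we call $h_k$ coherent. Let $S \subseteq n$ be the set of copyable coordinates.

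Exactly as in the proof of Theorem~\ref{THM_ManyI0Productive}, we then pass to an infinite set $C(\vec h) \subseteq X_n$ on which the equality pattern among $\set{h_i}{i \in S}$ is constant. The relation $h_i \restr C = h_j \restr C$ defines an equivalence relation $\sim$ on $S$. Since $S$ has only finitely many equality patterns, we can choose $C$ arithmetically by taking the lexicographically least pattern whose set is infinite. Let $m := |S/\!\sim|$.

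Each class $a$ needs a member attached to $\vec h$ onto which the common value $h_{i_a}$ is copied on $C(\vec h)$. We must avoid the at most $n-|S|$ indices $i$ for which $\Phi_n(\vec h, i) \in \set{f_{h_k}}{k \notin S}$. Since $m \le |S|$ and there are $n$ members in total, at least $n - (n-|S|) = |S| \ge m$ indices remain free. We assign the classes in increasing order to the least free indices and leave all other members unmodified. This counting is exactly where $n$ members per requirement are needed.

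\textbf{Verification.} Eventual difference follows as in the proof for $\calE_2$:
\begin{enumerate}[$\circ$]
\item off the supports we compare distinct codes $\text{e}(\cdot)$;
\item a copy on $C \subseteq T_X(h_i)$ agrees with any $\text{e}(f)$ at most once;
\item distinct requirements have almost disjoint supports;
\item two members attached to the same requirement carry copies of representatives of distinct $\sim$-classes, which differ everywhere on $C(\vec h)$.
\end{enumerate}

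For $n$-productivity we use Remark~\ref{REM_ProductiveEquivalence}. Given $\vec h$:
\begin{enumerate}[$\circ$]
\item For coherent $k$, the thinning gives $\text{e}(f_{h_k}) \restr C(\vec h) = h_k \restr C(\vec h)$. The function $\text{e}(f_{h_k})$ was modified either only on an almost disjoint support, or not at all, because we avoided its index. Hence $E(h_k, \dot{\text{e}}(\Psi_n(f_{h_k}))) \supseteq^* C(\vec h)$.
\item For copyable $k$, the member attached to the class $[k]$ equals $h_k$ on $C(\vec h)$.
\end{enumerate}
Intersecting these finitely many sets, each containing $C(\vec h)$ up to a finite set, gives an infinite set.

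Failure of $(n+1)$-productivity is proved exactly as in Lemma~\ref{LEM_E2NotCubic}. Choose $x_0,\dots,x_n$ pairwise distinct at every point and avoiding $D_m$ at each coordinate $m$. Then each $E(x_k,\cdot)$ lies inside a copy support. By almost disjointness, all $n+1$ witnesses are attached to the same requirement. Pigeonhole on the $n$ indices then forces two of the $x_k$ to agree infinitely often, a contradiction.

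\textbf{Borelness.} This is the same arithmetic bookkeeping as before: even coordinates are never modified, so they recover $(\vec h, i)$, and each step above (the thinnings, the chains, the pattern choice and the index assignment) is arithmetical in $\vec h$.

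\textbf{Main obstacle.} The step requiring the most care is the index assignment in the coherent case. We must make sure that no branch $f_{h_k}$ with $k$ coherent is overwritten on $C(\vec h)$, while all copyable classes still receive their own member. The counting bound $m \le |S|$ is what makes this possible.
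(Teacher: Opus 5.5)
Your proposal is correct and takes essentially the same approach as the paper. You use the same ingredients: sequential thinning of $B(\vec h)$ into copyable and coherent coordinates, an infinite class $C(\vec h)\subseteq X_n$ on which the equality pattern of the copyable coordinates is constant, and an assignment of the $m\le |S|$ classes to indices that avoid the at most $n-|S|$ coherent branches. Your verification of eventual difference, $n$-productivity, failure of $(n+1)$-productivity via the pigeonhole argument of Lemma~\ref{LEM_E2NotCubic}, and arithmeticity matches the paper's proof step for step.
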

	
	\begin{proof}
		Fix a computable bijection $\Phi_n:\calR_n \to \bairespace$ with computable inverse $\Psi_n$.
		We will modify the eventually different family
		\[
			\set{\text{e}(\Phi_n(\vec{h}, i))}{(\vec{h}, i) \in \calR_n}
		\]
		to be $n$-productive.
		Let $\ast(\vec{h})$ denote the standard coding of $n$-tuples of reals by reals, then we define the almost disjoint supports for copying
		\[
			B(\vec{h}) := B(\ast(\vec{h})) = \set{2 \langle \ast(\vec{h}) \restr k \rangle + 1}{k \in \omega}.
		\]
		Given $\vec{h} = (\enumzero{h}{n - 1}) \in (\infseq{\Seq})^n$, we will iteratively define a decreasing sequence of infinite subsets of $B(\vec{h})$
		\[
			X_0 \supseteq X_1 \supseteq \dots \supseteq X_n.
		\]	
		Set $X_0 := B(\vec{h})$ and assume $X_i$ has been defined.
		If $T_{X_i}(h_i)$ is infinite, we set $X_{i + 1} := T_{X_i}(h_i)$ and call $i$ a copyable coordinate for this requirement.
		Otherwise, let $X_{i+1}$ be the canonical strictly increasing chain in $X_i$, let $f_{h_i}$ be the corresponding union of $h_i(m_j)$ for $m_j \in X_{i + 1}$ and call $i$ a coherent coordinate with $f_{h_i}$ for this requirement.
		
		Now, let $S(\vec{h}) \subseteq n$ be the set of all copyable coordinates for this requirement.
		Then there is an $m(\vec{h}) \leq \sizeof{S(\vec{h})}$, a surjective function $\pi(\vec{h}):S(\vec{h}) \to m(\vec{h})$ and an infinite subset $C(\vec{h})$ of $X_n$ such that
		\begin{enumerate}
			\item for all $i,j \in S(\vec{h})$ with $\pi(\vec{h})(i) = \pi(\vec{h})(j)$ we have $h_i(n) = h_j(n)$ for all $n \in C(\vec{h})$,
			\item for all $i,j \in S(\vec{h})$ with $\pi(\vec{h})(i) \neq \pi(\vec{h})(j)$ we have $h_i(n) \neq h_j(n)$ for all $n \in C(\vec{h})$.
		\end{enumerate}
		It remains to discuss how to construct $\dot{\text{e}}(\vec{h}, i)$.
		Let $T(\vec{h}) := n \setminus S(\vec{h})$ be the set of all coherent coordinates.
		Then there is an injective function $\psi(\vec{h}):m(\vec{h}) \to n$ such that for all $i \in m(\vec{h})$ and $j \in T(\vec{h})$ we have
		\[
			\Phi_n(\vec{h}, \psi(\vec{h})(i)) \neq f_{h_j}.
		\]
		Finally, we define for $i \in m(\vec{h})$ the function
		\[
			\dot{\text{e}}(\vec{h}, \psi(\vec{h})(i))(k) :=
			\begin{cases}
				h_j(k) & \text{if } k \in C(\vec{h}),\\
				\text{e}(\Phi_n(\vec{h}, \psi(\vec{h})(i)))(k) & \text{otherwise},
			\end{cases}
		\]
		where $j$ is some (or any by (1)) member of $(\pi(\vec{h}))^{-1}(i)$.
		For any other $i \in n \setminus \ran(\psi(\vec{h}))$ we just set $\dot{\text{e}}(\vec{h}, i) := \text{e}(\Phi_n(\vec{h}, i))$ and define
		\[
			\calE_n := \set{\dot{\text{e}}(\vec{h}, i)}{(\vec{h}, i) \in \calR_n}.
		\]
		Given $(\vec{h},i),(\vec{g}, j) \in \calR_n$, since the starting family is e.d., the copy supports are almost disjoint and by the definition of copyable, by the same argument as before we may assume $\vec{h} = \vec{g}$, $i \neq j$ and $i,j \in \ran(\psi(\vec{h}))$, when checking eventual difference of $\dot{\text{e}}(\vec{h}, i)$ and $\dot{\text{e}}(\vec{g}, j)$.
		But then by (2) for $a, b \in S(\vec{h})$ with $(\psi(\vec{h}) \circ \pi(\vec{h}))(a) = i$ and $(\psi(\vec{h}) \circ \pi(\vec{h}))(b) = j$ we have
		\[
			h_a(k) \neq h_b(k) \text{ for all } k \in C(\vec{h}),
		\]
		which shows that for all $k \in C(\vec{h})$ we have
		\[
			\dot{\text{e}}(\vec{h}, i)(k) = h_a(k) \neq h_b(k) = \dot{\text{e}}(\vec{g}, j)(k).
		\]
		For $n$-productivity of $\calE_n$ let $\vec{h} \in (\infseq{\Seq})^n$.
		For $i \in S(\vec{h})$ we let $g_i := \dot{\text{e}}(\vec{h}, (\psi(\vec{h}) \circ \pi(\vec{h}))(i))$ and for $i \in T(\vec{h})$ we let $g_i := \dot{\text{e}}(\Psi_n(f_{h_i}))$, so that by the same argument as before we obtain
		\[
			E(h_i, g_i) \supseteq^* C(\vec{h}).
		\]
		But this implies
		\[
			\bigcap_{i \in n} E(h_i, g_i) \supseteq^* C(\vec{h})
		\]
		is infinite, verifying that $\calE_n$ is $n$-productive.
		Further, by a similar argument as Lemma~\ref{LEM_E2NotCubic} we have that $\calE_n$ is not $(n+1)$-productive, so it remains to argue that $\calE_n$ is Borel, and even arithmetic:
		\begin{enumerate}[$\circ$]
			\item By the previous discussion the copyable coordinates $S(\vec{h})$ and coherent coordinates $T(\vec{h})$ together with their witnesses $f_{h_i}$ are arithmetical in $\vec{h}$,
			\item In order to arithmetically find a canonical $\pi(\vec{h})$, $m(\vec{h})$ and $C(\vec{h})$ we can define an arithmetical in $\vec{h}$ equivalence relation $\equiv$ on $X_n$ by
			\[
				k \equiv l \quad \text{iff} \quad \forall i,j \in S(\vec{h}) \text{ we have } (h_i(k) = h_j(k) \iff h_i(l) = h_j(l))
			\]
			There are only finitely many equality patterns on the finite set $S(\vec{h})$.
			Thus, there are only finitely many equivalence classes for $\equiv$ on the infinite set $X_n$, so we let $C(\vec{h})$ be the first (in some canonical ordering) infinite such equivalence class.
			Now, define a second equivalence relation $\sim$ on $S(\vec{h})$, also arithmetical in $\vec{h}$:
			\[
				i \sim j \quad \text{iff} \quad \forall k \in C(\vec{h}) \text{ we have } h_i(k) = h_j(k).
			\]			
			 Let $m(\vec{h})$ be the number of equivalence classes of $\sim$ and $\pi(\vec{h}):S(\vec{h}) \to m(\vec{h})$ the canonical quotient map, which are both arithmetical in $\vec{h}$,
			\item There is also a canonical choice for $\psi(\vec{h}):m(\vec{h}) \to n$: There are at most $\sizeof{T(\vec{h})}$ many coordinates forbidden by the requirement that for all $i \in m(\vec{h})$ and $j \in T(\vec{h})$
			\[
				\Phi_n(\vec{h}, \psi(\vec{h})(i)) \neq f_{h_j}.
			\]
			But $n - \sizeof{T(\vec{h})} = \sizeof{S(\vec{h})} \geq m(\vec{h})$, so we can just define $\psi(\vec{h})$ by assigning the allowed coordinates in order, which is arithmetical in $\vec{h}$,
			\item Clearly, the definition of $\dot{\text{e}}(\vec{h}, i)$ is then arithmetical in $\vec{h}$ and $i$.
		\end{enumerate}
		As before, this shows that $\calE_n$ is arithmetic, because again the even coordinates may recover the parameters $(\vec{h}, i)$ of the construction.
	\end{proof}

	Finally, we may also obtain a Borel finitely productive med family $\calE_{<\omega}$.
	Finitely productive means $n$-productive for every $n \geq 1$, so the natural indexing set for $\calE_{<\omega}$ is the disjoint union of the $\calR_n$ given by
	\[
		 \calR_{<\omega} := \set{(n,\vec{h}, i)}{1 \leq n < \omega, \vec{h} \in (\infseq{\Seq})^n, i \in n}.
	\]
	We only sketch the proof as we basically just uniformly perform all previous constructions at the same time.

	\begin{theorem}\label{THM_BorelFinitelyProductive}
		There is a Borel finitely productive med family $\calE_{<\omega}$.
	\end{theorem}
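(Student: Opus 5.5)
The plan is to run the construction from the proof of Theorem~\ref{THM_BorelNProductive} uniformly in $n$, with index set $\calR_{<\omega}$. Fix a computable bijection $\Phi_{<\omega}:\calR_{<\omega}\to\bairespace$ with computable inverse $\Psi_{<\omega}$, and start from the ed family $\set{\text{e}(\Phi_{<\omega}(n,\vec h,i))}{(n,\vec h,i)\in\calR_{<\omega}}$. The copy supports must be almost disjoint across different arities as well as within one arity. So for $\vec h\in(\infseq{\Seq})^n$ I will set
\[
B(n,\vec h) := \set{2\langle (n,\ast(\vec h))\restr k\rangle+1}{k\in\omega},
\]
where the code also records $n$, for instance by coding the pair $(n,\ast(\vec h)\restr k)$. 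Then $\set{B(n,\vec h)}{(n,\vec h)}$ is almost disjoint, since two distinct requirements have codes whose initial segments differ from some stage on.

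For each requirement $(n,\vec h)$ I repeat the earlier procedure verbatim. First thin $X_0:=B(n,\vec h)\supseteq X_1\supseteq\dots\supseteq X_n$, sorting coordinates into copyable coordinates $S$ and coherent coordinates $T$ with branches $f_{h_j}$. Then take $C(n,\vec h)$ to be the first infinite $\equiv$-class in $X_n$, form the quotient $\pi:S\to m$, and choose an injection $\psi:m\to n$ with $\Phi_{<\omega}(n,\vec h,\psi(a))\neq f_{h_j}$ for all $j\in T$. This is possible because $n-\sizeof{T}=\sizeof{S}\geq m$. Finally, define $\dot{\text{e}}(n,\vec h,\psi(a))$ by copying $h_j$, for $j\in\pi^{-1}(a)$, on $C(n,\vec h)$, and leave all other members unmodified. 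Let $\calE_{<\omega}$ be the set of all $\dot{\text{e}}(n,\vec h,i)$.

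Eventual difference is checked exactly as before. Off the supports, the base family is ed. On a support against an unmodified function, copyability gives at most one agreement. Supports of distinct requirements, including requirements of different arity, are almost disjoint. Two members of the same requirement are disjoint on $C(n,\vec h)$ by condition~(2). For $n$-productivity, given $\vec h\in(\infseq{\Seq})^n$, I take $g_i:=\dot{\text{e}}(n,\vec h,\psi\pi(i))$ for $i\in S$ and $g_i:=\dot{\text{e}}(\Psi_{<\omega}(f_{h_i}))$ for $i\in T$. I then argue $E(h_i,g_i)\supseteq^* C(n,\vec h)$ as in the proof for $\calE_2$. The only point needing care is that the function with index $f_{h_i}$ is either unmodified, or modified on a support almost disjoint from $C(n,\vec h)$, or belongs to the same requirement but was excluded by the choice of $\psi$.

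Definability is the step I expect to need the most care. All the data are arithmetical in $(n,\vec h)$ uniformly in $n$, so membership in $\calE_{<\omega}$ is decided by reading off $\Phi_{<\omega}(n,\vec h,i)$ from the even coordinates and checking $g=\dot{\text{e}}(n,\vec h,i)$. The main obstacle is making sure that the uniformity in $n$ does not raise the complexity beyond a fixed arithmetical level, since the number of thinning steps grows with $n$. To handle this, I will observe that each step $X_i\mapsto X_{i+1}$ is given by a single arithmetical formula in $(h_i,X_i)$. The whole sequence $X_0,\dots,X_n$ is then defined by saying that there exists a finite sequence of reals satisfying this recursion. Because each $X_{i+1}$ is uniquely determined and the definitions are uniform in $n$, the relation ``$g\in\calE_{<\omega}$'' is at worst $\Delta^1_1$, which is enough for Borel.
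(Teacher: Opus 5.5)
Your proposal is correct and follows the paper's proof. It uses the same index set $\calR_{<\omega}$, copy supports whose codes record $n$ so they are almost disjoint across arities, the construction of $\calE_n$ run verbatim for each requirement, and the same eventual-difference and $n$-productivity arguments. The only divergence is the definability step: you argue via a $\Delta^1_1$ definition and Suslin's theorem, while the paper just notes the construction is arithmetical uniformly in $n$, which makes $\calE_{<\omega}$ a countable union over $n$ of arithmetical slices. Both routes give Borelness, and you are right that the arithmetical level may grow with $n$.
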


	\begin{proof}
		Fix a computable bijection $\Phi_{<\omega}:\calR_{<\omega} \to \bairespace$ with computable inverse $\Psi_{<\omega}$.
		We will modify the eventually different family
		\[
			\set{\text{e}(\Phi_{<\omega}(n, \vec{h}, i))}{(n, \vec{h}, i) \in \calR_{<\omega}}
		\]
		to be finitely productive.
		We again define almost disjoint copy supports
		\[
			B(n, \vec{h}) := B(\ast(n, \vec{h})) = \set{2\langle\ast(n, \vec{h}) \restr k \rangle + 1}{k \in \omega},
		\]
		where $\ast(n, \vec{h})$ is a coding of pairs of a natural number $n$ and a finite sequence of reals $\vec{h}$ of length $n$.
		Given $(n, \vec{h}, i) \in \calR_{<\omega}$, where $\vec{h} = (\enumzero{h}{n-1})$, we define $\dot{\text{e}}(n, \vec{h}, i)$ by the same construction as in $\calE_n$ starting with the copy domain $X_0 := B(n, \vec{h})$ and set
		\[
			\calE_{<\omega} := \set{\dot{\text{e}}(n, \vec{h}, i)}{(n, \vec{h}, i) \in \calR_{<\omega}}.
		\]
		By the same arguments as before $\calE_{<\omega}$ is eventually different and $n$-productive for every $n \geq 1$, so that $\calE_{<\omega}$ is finitely productive.
		Then, $\calE_{<\omega}$ is Borel, because the construction of $\calE_n$ is arithmetical uniformly in $n$.
	\end{proof}
	
	\section{Closed productive families}
	
	In this final section we will show how to reduce the complexity of the productive families of the last section to obtain closed families.
	In \cite{Schrittesser_2016} Schrittesser already proved that this is possible for the Horowitz-Shelah med family $\calE$.
	Schrittesser proved that the complexity of the original family $\calE$ is arithmetic and then gave an inductive argument to reduce the complexity one step at a time to obtain a closed med family.
	Here, we give a more direct argument to directly construct such closed families, which may also be applied to the original Horowitz-Shelah family to obtain a closed med family.
	We use that every Borel subset of a Polish space is the continuous injective image of a closed subset of $\bairespace$ \cite[15.3]{Kechris_1995}
	
	\begin{lemma}
		Assume there is a Borel med family $\calF$.
		Then there is a closed med family $\calG$.
		Furthermore, $\calF$ is $n$-productive iff $\calG$ is $n$-productive.
	\end{lemma}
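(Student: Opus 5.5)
The plan is to use the cited fact \cite[15.3]{Kechris_1995}: $\calF$ is the image of a closed set $C \subseteq \bairespace$ under a continuous injection $\varphi : C \to \bairespace$. We then \emph{interleave} each $\varphi(x)$ with the finite initial segments of its code $x$. The odd coordinates make the new family closed and keep eventual difference. The even coordinates carry all the combinatorics of $\calF$.

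\textbf{Construction.} For $x \in C$ define $g_x : \omega \to \omega \sqcup \Seq$ (a countable codomain, which the paper allows) by
\[
	g_x(2k) := \varphi(x)(k), \qquad g_x(2k+1) := x \restr k,
\]
and set $\calG := \set{g_x}{x \in C}$. We then check three things.

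\begin{enumerate}[$\circ$]
	\item \emph{$\calG$ is ed.} Let $x \neq y$. On odd coordinates $x \restr k \neq y \restr k$ for all large $k$. On even coordinates we have $\varphi(x) \neq \varphi(y)$ by injectivity, and these are eventually different because they are distinct members of $\calF$.
	\item \emph{$\calG$ is maximal.} Given $h$, apply maximality of $\calF$ to the function $k \mapsto h(2k)$, after replacing values outside $\omega$ by $0$. This yields $\varphi(x) =^\infty h(2\cdot)$, and hence $g_x =^\infty h$.
	\item \emph{$\calG$ is closed.} Suppose $g_{x_j} \to g$. The odd coordinates force $x_j \to x$ for the $x$ read off from $g(2k+1)$. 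Then $x \in C$ because $C$ is closed. By continuity $\varphi(x_j) \to \varphi(x)$, so $g = g_x \in \calG$.
\end{enumerate}

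\textbf{Productivity, forward direction.} Assume $\calF$ is $n$-productive and let $h_1, \dots, h_n$ be given. By Remark~\ref{REM_ProductiveEquivalence} applied to $\calF$, there are $\varphi(x_1), \dots, \varphi(x_n)$ whose equality sets with the even parts of the $h_i$ have infinite common intersection. Doubling these indices gives the corresponding infinite intersection for the $g_{x_i}$.

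\textbf{Productivity, converse direction.} Assume $\calG$ is $n$-productive and let $y_1, \dots, y_n \in \bairespace$. Define $h_i(2k) := y_i(k)$, and let $h_i(2k+1)$ be a sequence of length $k+1$. Then $h_i(2k+1)$ never equals any $x \restr k$, so each $E(h_i, g_x)$ consists only of even numbers. Halving an infinite set $\bigcap_i E(h_i, g_{x_i})$ then gives infinite $\bigcap_i E(y_i, \varphi(x_i))$, which is what Remark~\ref{REM_ProductiveEquivalence} requires for $\calF$.

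The main obstacle is choosing the construction so that closedness and preservation of maximality hold simultaneously. If the code $x \restr k$ were paired \emph{inside the same coordinate} as $\varphi(x)(k)$, maximality would fail. Separating the two onto even and odd coordinates resolves this. Once that choice is made, the remaining checks are routine.
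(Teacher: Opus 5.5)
Your interleaving idea matches the paper's, and your eventual-difference, closedness and converse arguments are fine. The problem is the codomain $\omega \sqcup \Seq$: it breaks maximality. Maximality of $\calG$ is tested against all functions $h : \omega \to \omega \sqcup \Seq$. Every $g_x$ takes values in $\omega$ at even places and in $\Seq$ at odd places. So fix $s \in \Seq$ and let $h(2k) := s$ and $h(2k+1) := 0 \in \omega$. Then $E(h, g_x) = \emptyset$ for every $x \in C$, and $\calG$ is not med.

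The faulty step is ``replacing values outside $\omega$ by $0$''. Maximality of $\calF$ gives some $\varphi(x)$ agreeing infinitely often with the modified function. But an agreement at a replaced point $k$ (where $h(2k) \in \Seq$) is not an agreement with $h$. When $h(2k) \notin \omega$ for almost all $k$, every agreement is of this kind. The forward direction of the productivity equivalence has the same flaw. For $n = 1$ the equivalence is in fact false for your $\calG$, since $\calF$ is $1$-productive and $\calG$ is not.

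The repair is to keep all values in one set controlled by the even coordinates, i.e.\ to code the odd entries as natural numbers. Put $g_x(2k+1) := \beta(x \restr k)$ for a fixed bijection $\beta : \Seq \to \omega$, so that $\calG \subseteq \bairespace$. Then:
\begin{enumerate}[$\circ$]
	\item every $h \in \bairespace$ yields a legitimate input $k \mapsto h(2k)$ for $\calF$ without any replacement;
	\item your eventual-difference and closedness arguments go through unchanged, since they only use injectivity of the coding;
	\item your converse still works with $h_i(2k+1) := \beta(t)$ for some $t$ of length $k+1$.
\end{enumerate}
This is exactly the paper's construction. It puts $\langle f \restr n, c \restr n \rangle \in \omega \setminus \simpleset{0}$ at the odd coordinates and uses the value $0$ in the converse direction.
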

	
	\begin{proof}
		Assume $\calF$ is a Borel med family.
		Choose a closed $C \subseteq \bairespace$ and a continuous injection $\iota:C \to \bairespace$ such that $\iota[C] = \calF$.
		Thus, also
		\[
			H := \set{(\iota(c), c)}{c \in C}
		\]
		is a closed subset of $\bairespace \times \bairespace$ and $\calF$ is the projection of $H$ onto the first component.
		Now, we can use the standard trick for coding with med families (see \cite{Schrittesser_2016} or \cite{FischerSchrittesser_2021}):
		
		Fix an injective coding $\langle \cdot, \cdot \rangle$ of the pairs of sequences of the same finite length by natural numbers such that $0 \notin \ran(\langle \cdot, \cdot \rangle)$.
		Now, define a continuous map $\chi:\bairespace \times \bairespace \to \bairespace$ by
		\begin{align*}
			\chi(f,c)(2n) &:= f(n)\\
			\chi(f,c)(2n+1) &:= \langle f \restr n, c \restr n \rangle
		\end{align*}
		Finally, we set $\calG := \chi(H)$.
		First, we show that $\chi$ is a closed map, so let $A\subseteq \bairespace \times \bairespace$ be closed and suppose $(\chi(f_k,c_k))_{k \in \omega}$ converges to some $g\in\bairespace$, where $(f_k,c_k)\in A$ for all $k$.
		Then for each $n$ the sequence
		\[
			\chi(f_k, c_k)(2n+1) = \langle f_k \restr n, c_k \restr n \rangle
		\]
		is eventually constant. Since the coding $\langle\cdot,\cdot\rangle$ is injective, it follows that for each $n$ both sequences
		\[
			f_k\restr n \quad \text{and} \quad c_k\restr n
		\]
		are eventually constant.
		Hence $(f_k,c_k)$ converges to some $(f,c)\in\bairespace\times\bairespace$.
		Since $A$ is closed we have $(f,c)\in A$.
		Moreover, by construction
		\[
			g(2n) = f(n) \quad \text{and} \quad g(2n+1) = \langle f \restr n, c \restr n \rangle
		\]
		for every $n$.
		Hence, $g = \chi(f,c) \in \chi[A]$.
		Therefore $\chi[A]$ is closed.
		In particular, since $H$ is closed, $\calG=\chi[H]$ is closed.
		
		It is also clear that the construction ensures that $\calG$ is ed, so it remains to show that $\calF$ is $n$-productive iff $\calG$ is $n$-productive.
		So assume $\calF$ is $n$-productive and let $\enumone{y}{n} \in \bairespace$.
		Define $\enumone{x}{n} \in \bairespace$ by
		\[
			x_i(k) := y_i(2k).
		\]
		Since $\calF$ is $n$-productive, there are $\enumone{f}{n} \in \calF$ such that $\bigcap_{i=1}^n E(x_i, f_i)$ is infinite.
		For each $1 \leq i \leq n$ let $c_i$ be the unique element of $C$ such that $(f_i,c_i)\in H$.
		Note that for every $i$ and every $k$ we have that $x_i(k) = f_i(k)$ implies $y_i(2k)=\chi(f_i,c_i)(2k)$.
		Hence $\bigcap_{i=1}^n E(y_i, \chi(f_i, c_i))$ is infinite.
		
		Conversely, assume $\calG$ is $n$-productive and let $\enumone{x}{n} \in \bairespace$.
		Define $\enumone{y}{n}\in\bairespace$ by
		\[
			y_i(2k) := x_i(k) \quad \text{and} \quad y_i(2k+1) := 0.
		\]
		By $n$-productivity of $\calG$, there are $(f_i,c_i) \in H$ such that $\bigcap_{i = 1}^n E(y_i,\chi(f_i, c_i))$ is infinite.
		Since $0\notin \ran(\langle \cdot,\cdot\rangle)$, we have
		\[
			y_i(2k+1) \neq \chi(f_i, c_i)(2k+1)
		\]
		for all $1\leq i\leq n$ and all $k\in\omega$.
		Hence, $\bigcap_{i=1}^n E(y_i, \chi(f_i,c_i))$ contains infinitely many even numbers, so that $\bigcap_{i=1}^n E(x_i,f_i)$ is infinite.
		Therefore, $\calF$ is $n$-productive if and only if $\calG$ is $n$-productive.
	\end{proof}
	
	\begin{corollary} \label{COR_ClosedProductive}
		For every $n \geq 1$ there is a closed $n$-productive but not $(n+1)$-productive med family and there is a closed finitely productive med family.
	\end{corollary}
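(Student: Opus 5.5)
The corollary follows by combining the Borel constructions of the previous section with the lemma just proved. For fixed $n \geq 1$ I take the Borel family $\calE_n$ of Theorem~\ref{THM_BorelNProductive}, which is $n$-productive but not $(n+1)$-productive. For the finitely productive case I take $\calE_{<\omega}$ of Theorem~\ref{THM_BorelFinitelyProductive}.

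These families live in $\infseq{\Seq}$ rather than $\bairespace$, so the first step is to transfer them. I fix a bijection $\theta:\Seq \to \omega$ and compose pointwise, sending $f \mapsto \theta \circ f$. This map is a homeomorphism $\infseq{\Seq} \to \bairespace$, so it preserves Borelness. It also satisfies $E(\theta \circ f, \theta \circ g) = E(f,g)$. By Remark~\ref{REM_ProductiveEquivalence}, maximality and $k$-productivity are stated entirely in terms of equality sets, so the image family has exactly the same productivity properties for every $k$.

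Next I apply the preceding lemma to the transferred Borel family $\calF$ and obtain a closed med family $\calG$. The lemma states that for each $k$, $\calF$ is $k$-productive iff $\calG$ is $k$-productive. With $k = n$ and $k = n+1$ this gives a closed $n$-productive but not $(n+1)$-productive med family. Applying the same equivalence for every $k \geq 1$ to $\calE_{<\omega}$ gives a closed finitely productive med family.

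No step here is a real obstacle. The one point to check is that the lemma's proof silently assumes $\calF \subseteq \bairespace$, which is exactly what the transfer by $\theta$ supplies. The "iff" in the lemma is what carries the failure of $(n+1)$-productivity over to the closed family.
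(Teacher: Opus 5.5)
Your proposal is correct and is essentially the paper's proof, which just applies the preceding lemma to $\calE_n$ and $\calE_{<\omega}$. The closed family $\calG$ produced there does not depend on $k$, so the ``iff'' carries over both $n$-productivity and the failure of $(n+1)$-productivity; your pointwise transfer along a bijection $\theta:\Seq\to\omega$ makes explicit a step the paper leaves implicit when it remarks that the choice of countable codomain makes no difference.
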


	\begin{proof}
		Apply the previous lemma to the families $\calE_n$ and $\calE_{<\omega}$ of the last section.
	\end{proof}
	
	Finally, there are some natural questions how complexity restrictions for med family $\calF$ restrict how the associated increasing chain of ideals
	\[
		\calI_0(\calF) \subseteq \calI_0(\calF^2) \subseteq \calI_0(\calF^3) \subseteq \dots
	\]
	looks like.
	In terms of these chains, we showed that for every $n \geq 1$, there is a closed med family $\calF$ such that $\calI_0(\calF^m)$ is proper exactly up to $m = n$, and there is closed med family such that $\calI_0(\calF^n)$ is proper for all $n \in \omega$.
	Now, since Van Douwen family cannot be analytic, by Lemma~\ref{PROP_RestrictionToVD} we at least know that $\calI_0(\calF)$ has to be tall for an analytic $\calF$.
	
	\begin{question}
		What other restrictions does the associated chain of an analytic maximal eventually different family have? Do we have analytic/Borel/closed $n$-$\calI_0$-productive families for $n > 1$? Or even finitely $\calI_0$-productive such families?
	\end{question}

	Note that there may be $\Pi^1_1$ Van Douwen families and thus $\Pi^1_1$ finitely $\calI_0$-productive families by the results of Millhouse and the author in \cite{MillhouseSchembecker_2025}.
	Lastly, we show that the family $\calE_{<\omega}$ is not even $2$-$\calI_0$-productive, in fact we have the following stronger observation:
	
	\begin{lemma}\label{LEM_StrictlyIncreasingI0}
		The associated chain of ideals of $\calE_{<\omega}$ is strictly increasing, i.e.\
		\[
			\calI_0(\calE_{<\omega}) \subsetneq \calI_0((\calE_{<\omega})^2) \subsetneq \calI_0((\calE_{<\omega})^3) \subsetneq \dots
		\]
	\end{lemma}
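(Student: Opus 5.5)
We show that for every $n \geq 1$ there is a set $A_n$ with
\[
A_n \in \calI_0((\calE_{<\omega})^{n+1}) \setminus \calI_0((\calE_{<\omega})^n).
\]
The inclusions $\calI_0(\calF^n) \subseteq \calI_0(\calF^{n+1})$ hold in general, so this gives strictness at every step. The natural candidate is the union of all arity-$n$ copy supports,
\[
A_n := \bigcup_{\vec{h} \in (\infseq{\Seq})^n} B(n,\vec{h}) = \set{2\langle \ast(n,\vec{h}) \restr k\rangle + 1}{\vec{h} \in (\infseq{\Seq})^n,\ k \in \omega}.
\]
We assume the codings are chosen so that $\ast(n,\vec{h})$ records $n$ in a fixed initial segment; if they are not, we adjust them harmlessly. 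Then $B(m,\vec{g}) \cap A_n$ is finite whenever $m \neq n$, so on $A_n$ only members attached to arity-$n$ requirements are ever modified.

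\emph{Step 1: $A_n \in \calI_0((\calE_{<\omega})^{n+1})$.} We copy the proof of Lemma~\ref{LEM_E2NotCubic}, restricted to $A_n$. Choose $x_0,\dots,x_n$ on $A_n$ whose values at each $k$ are pairwise distinct and lie outside $D_k$. Every $\dot{\text{e}}(m,\vec{g},i)$ takes values in $D_k$ outside $C(m,\vec{g},i)$, so $E(x_j, \dot{\text{e}}(m,\vec{g},i)) \restr A_n$ is almost contained in $C(m,\vec{g},i) \cap A_n$. This set is finite unless $m = n$.

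Suppose $n+1$ members have an infinite common agreement on $A_n$. Almost disjointness of the supports forces all of them to come from one requirement $(n,\vec{g})$. There are only $n$ members per requirement, so by pigeonhole two of the $x_j$ are matched with the same function, and hence agree infinitely often. This contradicts the choice of the $x_j$.

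\emph{Step 2: $A_n \notin \calI_0((\calE_{<\omega})^n)$.} Given $y_0,\dots,y_{n-1}\colon A_n \to \Seq$, we must find $g_0,\dots,g_{n-1} \in \calE_{<\omega}$ such that $\bigcap_i E(y_i,g_i)$ is infinite. The key step is to build $\vec{h}$ by recursion so that $h_i \restr B(n,\vec{h}) = y_i \restr B(n,\vec{h})$.

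Whether a position $p$ lies in $B(n,\vec{h})$, namely $p = 2\langle \ast(n,\vec{h}) \restr k\rangle + 1$, depends only on a finite initial segment $\ast(n,\vec{h}) \restr k$. We assume the coding satisfies $\langle s \rangle \geq \length(s)$. Then this segment is determined by the values of the $h_i$ at positions smaller than $p$, since $k < p$ and $\ast$ interleaves the coordinates. So we define $h_i(p)$ by recursion on $p$: put $h_i(p) := y_i(p)$ if the already-defined part of $\vec{h}$ places $p$ in $B(n,\vec{h})$, and let $h_i(p)$ be arbitrary otherwise.

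For this $\vec{h}$, the $n$-productivity argument from Theorem~\ref{THM_BorelNProductive}, applied inside $\calE_{<\omega}$, yields $g_i$ with $E(h_i,g_i) \supseteq^* C(n,\vec{h})$. Since $C(n,\vec{h}) \subseteq B(n,\vec{h}) \subseteq A_n$ and $h_i = y_i$ there, we get $\bigcap_i E(y_i,g_i) \supseteq^* C(n,\vec{h})$, which is infinite.

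The main obstacle is Step 2: the self-referential definition of $\vec{h}$, where the support depends on $\vec{h}$ itself. The point to check carefully is that the coding conventions make membership of $p$ in $B(n,\vec{h})$ depend only on values of $\vec{h}$ strictly below $p$. The other point needing care is that $B(m,\vec{g}) \cap A_n$ is finite for $m \neq n$, which is where we use the assumption on how $\ast(n,\vec{h})$ records $n$.
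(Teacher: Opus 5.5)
Your proof is correct and follows the paper's argument: you use the same sets $A_n$, the same pigeonhole argument with values outside $D_k$ to get $A_n \in \calI_0((\calE_{<\omega})^{n+1})$, and the same appeal to the $n$-productivity witnesses on $C(n,\vec{h})$ to get $A_n \notin \calI_0((\calE_{<\omega})^n)$. The only difference is that your recursive construction of $\vec{h}$ in Step 2 is unnecessary, since each $y_i$ is defined on all of $A_n$ and $C(n,\vec{h}) \subseteq B(n,\vec{h}) \subseteq A_n$ for every $\vec{h}$; the paper simply extends the $y_i$ arbitrarily to $\omega$, so the self-referential obstacle you flag never arises.
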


	\begin{proof}
		For $n \geq 1$ we define
		\[
			A_n := \bigcup_{\smash{\sizeof{\vec{h}} = n}} B(n, \vec{h}).
		\]
		Note that the $A_n$'s are pairwise almost disjoint and we show that
		\[
			A_n \in \calI_0((\calE_{<\omega})^{n+1}) \setminus \calI_0((\calE_{<\omega})^n).
		\]
		Towards $A_n \notin \calI_0((\calE_{<\omega})^{n})$, let $\enumzero{\hat{h}}{n-1}:A_n \to \Seq$ be infinite partial functions.
		Extend every $\hat{h}_i$ to $h_i:\omega \to \Seq$ arbitrarily.
		Then by construction of $\calE_{<\omega}$ there are $\enumzero{g}{n-1} \in \calE_{<\omega}$ such that
		\[
			\bigcap_{i \in n} E(g_i, f_i) \supseteq^* C(n, \vec{h}).
		\]
		But $C(n, \vec{h}) \subseteq B(n, \vec{h}) \subseteq A_n$, so that also $\bigcap_{i \in n} E(g_i, \hat{h}_i)$ is infinite.
		This shows that $(\calE_{<\omega})^n \restr A_n$ is maximal, i.e.\ $A_n \notin \calI_0((\calE_{<\omega})^{n})$.
		
		Towards $A_n \in \calI_0((\calE_{<\omega})^{n+1})$, we choose $\enumzero{x}{n}: A_n \to \Seq$ so that the $x_i(k)$'s are pairwise distinct and $x_i(k) \notin D_k$.
		Towards a contradiction, assume that there were $\enumzero{g}{n} \in \calE_{<\omega}$ with
		\[
			\bigcap_{i\in n+1} E(x_i, f_i)
		\]
		infinite.
		For each $i \in n + 1$ let $(n_i, \vec{h}_i, l_i)$ be the parameters recovered from the even coordinates of $g_i$.
		Since $x_i(k) \notin D_k$ we have that $E(x_i,f_i)$ must be a subset of the copy support $B(n_i, \vec{h}_i) \subseteq A_{n_i}$.
		But $\dom(x_i) = A_n$ and the $A_n$ are almost disjoint, which implies that $n_i = n$ for all $i \in n + 1$.
		But distinct copy supports are almost disjoint, so we must have that $(n_i, \vec{h}_i, l_i) = (n, \vec{h}, l_i)$ for some common $\vec{h} \in (\infseq{\Seq})^n$.
		By pigeonhole, there are $i \neq j$ with $(n_i, \vec{h}_i, l_i) = (n_j, \vec{h}_j, l_j)$, so $g_i = g_j$.
		But this contradicts that $x_i(k) \neq x_j(k)$ for all $k \in A_n$.
	\end{proof}

	\bibliographystyle{plain}
	\bibliography{refs}
	
\end{document}